\newsavebox{\smlmat}
\savebox{\smlmat}{$\left(\begin{smallmatrix}\sqrt{2/\sqrt{3}}&\frac{1}{\sqrt{2\sqrt{3}}} \\ 0 & \sqrt{\sqrt{3}/2} \end{smallmatrix}\right)$}
\newsavebox{\ssmlmat}
\savebox{\ssmlmat}{$\left(\begin{smallmatrix}\sqrt{2/\sqrt{3}}&-\frac{1}{\sqrt{2\sqrt{3}}} \\ 0 & \sqrt{\sqrt{3}/2} \end{smallmatrix}\right)$}
\newsavebox{\sssmlmat}
\savebox{\sssmlmat}{$\left(\begin{smallmatrix} e^{u} &0\\ 0 & e^{-u}\end{smallmatrix} \right)
 \left(\begin{smallmatrix} 1 & s \\ 0 &1\end{smallmatrix} \right)
$}
\newtheorem{theorem}{Theorem}[section]
\newtheorem{lemma}[theorem]{Lemma}
\newtheorem{corollary}[theorem]{Corollary}
\newtheorem{proposition}[theorem]{Proposition}
\numberwithin{equation}{section}
\newtheorem{conjecture}[theorem]{Conjecture}
\theoremstyle{definition}
\newtheorem{definition}[theorem]{Definition}
\newtheorem{example}[theorem]{Example}
\newtheorem{remark}[theorem]{Remark}
\def\leq{\leqslant }
\def\geq{\geqslant}
\begin{document}

\title{Minimizing entropy for translation surfaces}

\author{Paul Colognese and Mark Pollicott}

\date{}
\address{P. Colognese, Department of Mathematics, Warwick University, Coventry,
   CV4 7AL, UK.}
\email{paul.colognese@gmail.com}

\address{M. Pollicott, Department of Mathematics, Warwick University, Coventry,
   CV4 7AL, UK.}
\email{masdbl@warwick.ac.uk}

\thanks{The authors would like to thank L.B\'etermin for suggesting the connection with his work \cite{betermin} and Samuel Leli\`evre for helpful comments.   We are also grateful to the referee for their helpful comments and suggestions. The second author was supported by ERC Grant
833802-Resonances  and EPSRC grant EP/T001674/1.}

\begin{abstract}
In this note we consider  the entropy  \cite{{dankwart}} of  unit area translation surfaces in the $SL(2, \mathbb R)$ orbits of square tiled surfaces that are the  union of  squares,  
where the singularities occur at the vertices and  the singularities have a common cone angle.
We show that the entropy over such orbits   is minimized 
at those surfaces
 tiled by equilateral triangles where the singularities occur precisely at the vertices. We also provide a method for approximating the entropy of surfaces in the orbits.
\end{abstract}
\maketitle

\section{Introduction}

We begin by  recalling  for the purposes of motivation  a well known classical  result of Katok from 1982 for compact  negatively curved surfaces. 
 Let $\mathcal M_g$ denote the space of negatively curved $C^\infty$ Riemannian metrics of unit volume on a compact orientable surface of genus $g \geq 2$.   
 The
 entropy function $h: \mathcal M_g \to \mathbb R^+$ 
 can be defined in terms of 
the  growth rate of 
    closed geodesics 
 $$
 h(\rho) = \lim_{T \to +\infty} \frac{1}{T} \log \#\{\gamma \hbox{ : } \ell_\rho(\gamma) \leq T\}
 $$
 where
 $\rho \in \mathcal M_g$ and
  $\ell_\rho(\gamma)$ denotes  the length of a closed $\rho$-geodesic $\gamma$.
When restricted to metrics  of unit  volume the entropy 
is minimized  precisely at metrics of constant curvature \cite{katok}.

In this note we want to formulate a partial analogue of this result for translation surfaces. Informally, a translation surface can be thought of as a closed surface obtained from taking a collection of polygons in the plane and gluing together parallel edges via isometries (see \cite{zorich} for a good introduction to translation surfaces). A translation surface has a finite number of singularities with cone angles of the form $2\pi(k+1)$ where $k\in\mathbb N$. To see what this means, consider the following construction: let $k\in\mathbb N$ and take $(k+1)$ copies of the upper half-plane with the usual metric and $(k+1)$ copies of the lower half-plane. Then glue them together along the half infinite rays $[0,\infty)$ and $(-\infty,0]$ in cyclic order (Figure \ref{fig:nbhdsing}). 

\begin{figure}[h!]
    \centering
    \begin{tikzpicture}[scale=0.7]
      \draw (0,0)--(3,0);
      \draw (4,0)--(7,0);
      \draw (8,0)--(11,0);
      \draw (12,0)--(15,0);
      
      \draw[dashed] (0,0) arc (180:360:1.5);
        \draw[dashed] (7,0) arc (0:180:1.5);
              \draw[dashed] (8,0) arc (180:360:1.5);
                    \draw[dashed] (15,0) arc (0:180:1.5);
                    
      \draw [fill= red] (1.5,0) circle (0.1);
            \draw [fill= red] (5.5,0) circle (0.1);
                  \draw [fill= red] (9.5,0) circle (0.1);
                        \draw [fill= red] (13.5,0) circle (0.1);

          \node[above] at (0.75,0) {$A$};
                    \node[above] at (2.25,0) {$B$};
                \node[above] at (8.75,0) {$C$};
                      \node[above] at (10.25,0) {$D$};
                  \node[below] at (4.75,0) {$C$};
              \node[below] at (6.25,0) {$B$};
              \node[below] at (12.75,0) {$A$};
             \node[below] at (14.25,0) {$D$};

    \end{tikzpicture}
    \caption{Four half-disks glued together cyclically. A singularity/cone-point of angle $4\pi$ on a translation surface has a neighborhood isometric to a neighborhood of the the origin in the picture (the red dot). }
    \label{fig:nbhdsing}
\end{figure}
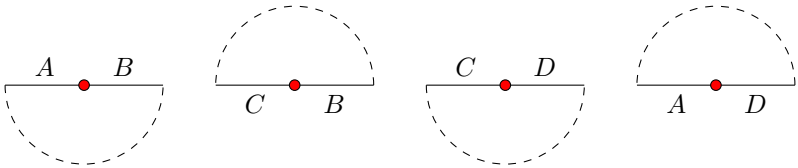

There are a few equivalent definitions of translation surfaces that appear in the literature. We will use the following definition (see \cite{wright}).

\begin{definition}\label{def:transsurface}
A translation surface is a closed and connected topological surface, $X$, together with a finite set of points $\Sigma$ and an atlas of charts to $\mathbb C$ on $X\backslash \Sigma$, whose transition maps are translations. Furthermore, we require that for each point $x\in \Sigma$, there exists some $k\in\mathbb N$ and a homeomorphism of a neighborhood of $x$ to a neighborhood of the origin in the $2k+2$ half-plane construction that is an isometry away from $x$. 
\end{definition}

Dankwart  associated to a translation surface $X$ with a non-empty finite singularity set  $\Sigma$ an analogous notion of  entropy 
 \cite{dankwart}.   
Given $k_1, \ldots, k_n \geq 1$ we can denote by $\mathcal H^1(k_1, \ldots, k_n)$ the space of
unit area  translation 
surfaces with  $n$ singularities in $\Sigma$ with cone angles $2\pi (k_1+1), \ldots, 2\pi (k_n+1)$.
The entropy function  $h: \mathcal H^1(k_1, \ldots, k_n) \to \mathbb R^+$
can be defined in terms of  
 the growth rate of closed  geodesics  containing  a singular point 
 $$
 h(X) = \lim_{T \to +\infty} \frac{1}{T} \log \#\{\gamma \hbox{ : } \gamma \cap \Sigma \neq \emptyset,  \ell_X(\gamma) \leq T\}
 $$
 where $\ell_X(\gamma)$ denotes  the length of a closed geodesic $\gamma$
 on $X$ which includes a singular point from $\Sigma$.\footnote{This avoids the complication of  accounting for cylinders of uncountably many parallel geodesics. 
 Alternatively, we could account for these by counting only their free homotopy classes, but then their polynomial growth does not affect the definition of the entropy.}
 The entropy is continuous and bounded below, and can become arbitrarily large (when a closed geodesic becomes sufficiently small) even when the total area is normalized (although it will always be finite) \cite{dankwart}.
We  restrict 
the type of  translation surfaces we will consider as follows:
\begin{enumerate}
\item
Firstly, 
we  fix  a unit area  square tiled surface
$X_0$, 
which is a union of squares 
where the singularities occur at the vertices and  the singularities have a common cone angle; and 
\item
Secondly,  we consider the  three dimensional orbit $SL(2, \mathbb  R) X_0$ associated to the linear action of the group
 $SL(2, \mathbb  R)$ \cite{wright}.
\end{enumerate}

The surfaces described in (1) are known as square tiled surfaces (see \cite{matheus} for a good introduction to square tiled surfaces). 

Note that the area of surfaces in the orbit $SL(2, \mathbb R) X_0$ coincide with the area of $X_0$.

Our main result is the following theorem.

\begin{theorem}\label{main}
If $X_0$ satisfies the hypotheses in (1) 
 then the entropy function
$$h: SL(2, \mathbb  R) X_0 \to \mathbb R^+$$ 
is minimized at equilateral translation surfaces, by which we mean translation surfaces tiled by equilateral triangles where the singularities occur precisely at the vertices of the triangles.
\end{theorem}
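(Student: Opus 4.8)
The plan is to reduce the minimization over the three-dimensional orbit $SL(2,\mathbb R)X_0$ to a one-parameter problem, and then to a clean geometric optimization. I would begin by recording how Dankwart's entropy transforms under the $SL(2,\mathbb R)$-action. Since the action is by linear maps on the charts, a matrix $g \in SL(2,\mathbb R)$ distorts lengths of saddle connections, and the entropy $h(gX_0)$ depends on $g$ only through the data governing the exponential growth of singular geodesics. Using the Cartan/Iwasawa decomposition $g = k_\theta \cdot \mathrm{diag}(e^u,e^{-u}) \cdot n_s$ (the shape of the saved matrix boxes $\protect\usebox{\sssmlmat}$ suggests exactly this parametrization), I would argue that the rotation $k_\theta$ leaves the entropy invariant, so $h$ descends to a function of the two remaining parameters $(u,s)$, or equivalently to a function on the moduli of the underlying lattice/flat structure.

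Next I would make the counting of singular closed geodesics explicit for square-tiled surfaces. Closed geodesics through a singularity correspond to saddle connections (or concatenations thereof) whose holonomy vectors lie in a lattice determined by $X_0$; under the action of $g$ this lattice is sent to $g\Lambda$. The growth rate in the definition of $h(X)$ is therefore controlled by the counting function $\#\{v \in g\Lambda : \|v\| \le T\}$ and its refinements, so the key analytic step is to express $h(gX_0)$ in terms of a spectral or geometric invariant of the deformed lattice $g\Lambda$ — I expect the relevant quantity to be governed by the shortest vectors and the covolume (which is fixed, since area is preserved). This is where I would invoke the earlier continuity and finiteness properties of $h$ quoted from \cite{dankwart} to ensure the minimum is attained.

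The heart of the argument is then a geometric optimization: among all unimodular lattices in the $SL(2,\mathbb R)$-orbit of the square lattice $\mathbb Z^2$, identify which shape minimizes the entropy. Here I would connect to Bétermin's work (acknowledged in the paper) on lattice energy minimization, where the hexagonal/equilateral-triangular lattice is the universal minimizer among unimodular lattices for a wide class of energies. The claim that the equilateral configuration minimizes $h$ should follow once $h$ is shown to be a monotone/decreasing function of an appropriate lattice energy (or, equivalently, an increasing function of the length of the shortest saddle connection relative to covolume), because the equilateral triangle tiling corresponds precisely to the hexagonal lattice, which maximizes the shortest-vector length among unimodular lattices. Concretely, I would parametrize the orbit by the modular parameter $\tau$ in the upper half-plane, reduce to the standard fundamental domain, and show $h(\tau)$ attains its extremum at $\tau = e^{i\pi/3}$, the hexagonal point.

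The main obstacle I anticipate is the middle step: rigorously identifying the entropy $h(gX_0)$ as a function of the lattice geometry in a form amenable to the Bétermin-type minimization. Dankwart's entropy is defined as an exponential growth rate, not manifestly as a lattice energy, so the crux is to prove that for these square-tiled surfaces the growth rate of singular closed geodesics is controlled by (a monotone function of) the systole or the theta-function energy of the deformed lattice. I expect this to require a careful analysis of how saddle connections concatenate into closed geodesics through singularities, and a comparison estimate showing the leading exponential asymptotics depend only on the lattice shape and not on the finer combinatorial structure of the tiling. Once that reduction is secured, the optimization to the equilateral/hexagonal point is the comparatively routine endgame.
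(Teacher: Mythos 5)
Your outline has the right global architecture --- quotient out the rotation, reduce the minimization over the orbit to a question about the shape of the deformed lattice $A\Lambda$, and then invoke a universal-optimality result for the hexagonal lattice --- and this is indeed the route the paper takes. But the two steps you yourself flag as ``the crux'' are precisely where the content lies, and as written the proposal does not supply either of them. First, the reduction of the entropy to a lattice quantity is not merely a counting comparison: the paper proves an exact renewal identity. A saddle connection path on $AX_0$ decomposes uniquely into ``singular connections,'' each of which projects to a primitive closed geodesic on the covering torus and hence corresponds to a vector of $A\Lambda$ together with a bounded combinatorial multiplicity ($k+1$ choices for the first, $k$ for each subsequent one, after the geodesic angle conditions remove exactly one lift per $(k+1)$-tuple). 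Summing the resulting geometric series gives $\sum_{\underline s} e^{-t\ell(\underline s)} = n(k+1)f_t(A)/(1-kf_t(A))$ with $f_t(A)=\sum_{v\in A\Lambda}e^{-t\|v\|}$, so the entropy is characterized as the unique $t$ with $f_t(A)=1/k$. Without an identity of this kind you cannot conclude that $h$ is a monotone function of any single lattice energy, which is what your endgame requires.

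Second, your hedge between ``theta-function energy'' and ``systole'' hides a real issue: the entropy is \emph{not} a function of the shortest-vector length alone (two unimodular lattices with equal systole generally have different sums $f_t$ and hence different solutions of $f_t=1/k$), so the systole version of your argument fails. What is needed is that the specific energy $L\mapsto\sum_{v\in L}e^{-t\|v\|}$ is minimized at the triangular lattice for \emph{every} $t>0$; only then does the implicit-function comparison $f_t(A)<f_t(B)$ for all $t$, together with monotonicity of $f_t$ in $t$, force $h(A)<h(B)$. Montgomery's theorem gives this minimization only for the Gaussian energies $\sum e^{-t\|v\|^2}$. The missing ingredient --- which is B\'etermin's observation and occupies Section 6 of the paper --- is that $x\mapsto e^{-\sqrt{x}}$ is completely monotone, so by Bernstein's theorem $e^{-t\|v\|}=\int_0^\infty e^{-ut^2\|v\|^2}\,d\mu(u)$ for a positive measure $\mu$, and the inequality for Gaussian energies integrates to the inequality for $f_t$. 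With these two steps supplied your plan becomes the paper's proof; without them it remains a plausible strategy rather than an argument.
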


We take the convention that we  identify  surfaces that  are  identical up to a rotation (i.e., the action of $SO(2)$).

Theorem \ref{main} applies  to the following simple example and to the examples listed in \S3.
Furthermore, it is known that every stratum contains an equilateral translation  surface
 \cite{BG}.

\begin{example}\label{lshape}
Let $X_0$ be the  $L$-shaped square tiled translation surface made up of three squares  
(see Figure 1).   The surface $X_0$ has genus $2$ and a single singularity of cone angle $6\pi$.
The $SL(2,\mathbb R)$-orbit,  $SL(2, \mathbb  R)X_0$,  contains one
equilateral translation surface up to isometry.  
This  surface  globally minimizes entropy 
in the orbit space.
\end{example}

\begin{figure}[h!]
\begin{tikzpicture}[scale=0.4, every node/.style={scale=0.6}]
\node at (-2,5) {\hbox{\huge (a)}};
\draw[gray, thick] (0,0)--(6,0);
\draw[gray, thick] (3,0)--(3,6);
\draw[gray, thick] (0,0)--(0,6);
\draw[gray, thick] (3,0)--(3,3);
\draw[gray, thick] (6,0)--(6,3);
\draw[gray, thick] (3,6)--(0,6);
\draw[gray, thick] (0,3)--(6,3);
\draw[gray, thick] (0,0)--(0,3);
\draw[gray, thick] (3,0)--(3,3);
\draw[gray, thick] (6,0)--(6,3);
\draw[gray, thick] (6,9)--(6,9);
\draw [fill=red] (0,0) circle [radius=0.15];
\draw [fill=red] (0,3) circle [radius=0.15];
\draw [fill=red] (3,3) circle [radius=0.15];
\draw [fill=red] (3,6) circle [radius=0.15];
\draw [fill=red] (0,6) circle [radius=0.15];
\draw [fill=red] (3,0) circle [radius=0.15];
\draw [fill=red] (6,0) circle [radius=0.15];
\draw [fill=red] (6,3) circle [radius=0.15];
\node at (1.5,-0.4) {$1$};
\node at (4.5,-0.4) {$2$};
\node at (1.5,6.4) {$1$};
\node at (4.5,3.4) {$2$};
\node at (-0.4,1.5) {$a$};
\node at (6.4,1.5) {$a$};
\node at (-0.4,4.5) {$b$};
\node at (3.4, 4.5) {$b$};
\end{tikzpicture}
\hskip 0.25cm
\begin{tikzpicture}[scale=0.45, every node/.style={scale=0.6}]
\node at (0,4.5) {\hbox{\huge (b)}};
\draw[gray, thick] (0,0)--(6,0);
\draw[gray, thick] (0,0)--(1.5,2.6);
\draw[gray, thick] (1.5,2.6)--(3,5.2);
\draw[gray, thick] (1.5,2.6)--(7.5,2.6);
\draw[gray, thick] (3,5.2)--(6,5.2);
\draw[gray, thick] (6,0)--(7.5,2.6);
\draw[gray, thick] (3,0)--(6,5.2);
\draw[gray, dashed] (3,0)--(1.5,2.6);
\draw[gray, dashed] (6,0)--(4.5,2.6);
\draw[gray, dashed] (3,5.2)--(4.5, 2.6);
\draw [fill=red] (0,0) circle [radius=0.15];
\draw [fill=red] (1.5,2.6) circle [radius=0.15];
\draw [fill=red] (4.5,2.6) circle [radius=0.15];
\draw [fill=red] (6,5.2) circle [radius=0.15];
\draw [fill=red] (3,5.2) circle [radius=0.15];
\draw [fill=red] (3,0) circle [radius=0.15];
\draw [fill=red] (6,0) circle [radius=0.15];
\draw [fill=red] (7.5,2.6) circle [radius=0.15];
\node at (1.5,-0.4) {$1$};
\node at (4.5,-0.4) {$2$};
\node at (6,3.2) {$2$};
\node at (4.5,5.8) {$1$};
\node at (0,1.3) {$a$};
\node at (7.4,1.3) {$a$};
\node at (1.7,4.0) {$b$};
\node at (5.7, 4.0) {$b$};
\end{tikzpicture}
\begin{tikzpicture}[scale=0.45, every node/.style={scale=0.6}]
\node at (3.5,4.5) {\hbox{\huge (c)}};
\draw[gray, thick] (0,0)--(6,0);
\draw[gray, dashed] (0,0)--(1.5,2.6);
\draw[gray, dashed] (-1.5,2.6)--(0,5.2);
\draw[gray, thick] (1.5,2.6)--(0,5.2);
\draw[gray, thick] (-1.5,2.6)--(-3,5.2);
\draw[gray, thick] (-1.5,2.6)--(4.5,2.6);
\draw[gray, thick] (-3,5.2)--(0,5.2);
\draw[gray, thick] (6,0)--(4.5,2.6);
\draw[gray, dashed] (3,0)--(4.5,2.6);
\draw[gray, thick] (3,0)--(1.5,2.6);
\draw[gray, dashed] (6,0)--(4.5,2.6);
\draw [fill=red] (0,0) circle [radius=0.15];
\draw [fill=red] (-1.5,2.6) circle [radius=0.15];
\draw [fill=red] (1.5,2.6) circle [radius=0.15];
\draw [fill=red] (-3,5.2) circle [radius=0.15];
\draw [fill=red] (0,5.2) circle [radius=0.15];
\draw [fill=red] (3,0) circle [radius=0.15];
\draw [fill=red] (6,0) circle [radius=0.15];
\draw [fill=red] (4.5,2.6) circle [radius=0.15];
\node at (1.5,-0.4) {$1$};
\node at (4.5,-0.4) {$2$};
\node at (3,3.2) {$2$};
\node at (-1.5,1.3) {$a$};
\node at (5.9,1.3) {$a$};
\node at (-3.0,4.0) {$b$};
\node at (1.5, 4.0) {$b$};
\node at (-1.5, 5.8) {$1$};
\draw[gray, thick] (0,0)--(-1.5,2.6);
\node at (-2.7,1) {\hbox{\huge =}};
\end{tikzpicture}
\caption{(a) 
The $L$-shaped translation surface $X_0$ where the horizontal and vertical sides are identified; (b)  and (c)  are  equivalent surfaces $X_1=$
~\usebox{\smlmat}$X_0$
and $X_2=$
~\usebox{\ssmlmat}$X_0$
 in the orbit which are triangulated by equilateral triangles.
}
\end{figure}


 In Figure 2  we have plotted  an approximation\footnote{This plot was obtained using the method for approximating the entropy of surfaces in the orbit of $X_0$ derived in Section 8.} to the entropy of the surfaces.
 $$\left(\begin{matrix} e^{u} &0\\ 0 & e^{-u}\end{matrix} \right) \left(\begin{matrix} 1 & s \\ 0 &1\end{matrix} \right)X_1
 \hbox{ for } -1/2 \leq s \leq 1/2 \hbox{ and } -\frac{1}{10}  \leq u \leq \frac{1}{10}
 $$
 and indicated the point above $X_1=X_2$.
 We confirm   empirically 
that the entropy is locally minimized at $X_1=X_2$. A simple symmetry argument confirms that $X_0$ is also a critical point.

\begin{figure}[h!]
\centerline{
\includegraphics[height=6.95cm]{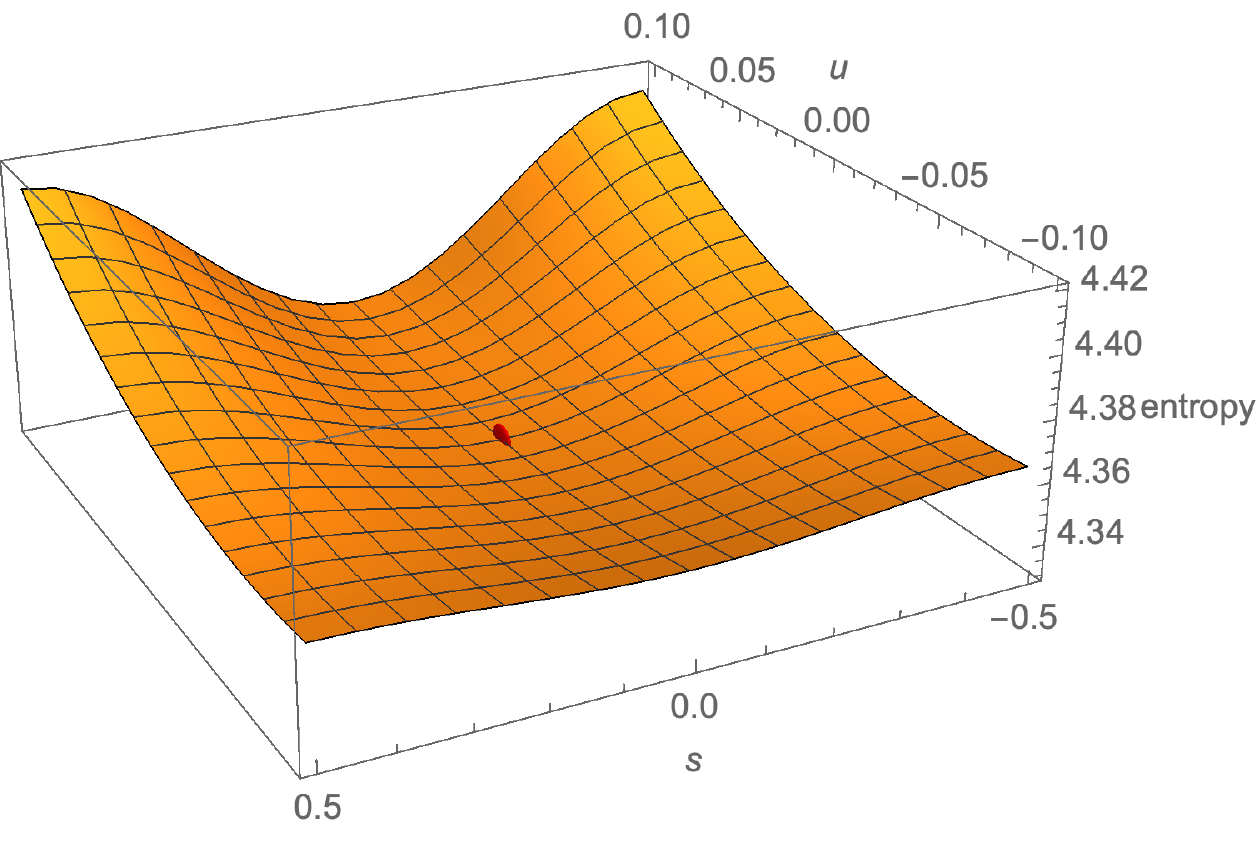}
}
\caption{A numerical approximation to the plot of the entropy of the surface 
~\usebox{\sssmlmat}$X_1$, where $X_1$ is represented by  the surface in Figure 1 (b). The minimum occurring  at $s=0$ and $u=0$ is illustrated.}
\end{figure}

If we homothetically scale any  translation surface by a factor $c>0$ then the entropy scales by $1/c$, but the area scales by $c^2$.  Therefore,  it is appropriate to consider translation surfaces  scaled to have unit area, say. 
Let $\mathcal H^1[k, n]$ denote a stratum of unit area surfaces with $n$ singularities, each with the same cone angle 
$2\pi (k+1)$, where $n, k \geq 1$.    
In light of Theorem \ref{main} we conjecture\footnote{This conjecture may be related to the  ``Universal Optimality Conjecture'' (see Conjecture 9.4 in Section 9 of \cite{cohn}).} the following:

\begin{conjecture}
The entropy function $h:\mathcal H^1[k, n] \to \mathbb R^+$ has global minima at equilateral translation surfaces.
\end{conjecture}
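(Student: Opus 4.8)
The plan is to reduce Dankwart's entropy to the critical exponent of an explicit lattice sum, and then to deduce its minimization at the equilateral surface from the optimality of the hexagonal lattice. First I would identify $h(X)$ with a volume growth exponent computable from the flat geometry. For $X=gX_0$ the only curvature sits at the cone points $\Sigma$, all of excess angle $>2\pi$, so the universal cover $\widetilde X$ is CAT(0) and the deck group $\Gamma$ acts cocompactly by isometries. Taking the Margulis/Manning-type machinery for these nonpositively curved cone surfaces from \cite{dankwart}, I expect
$$
h(X)=\delta_\Gamma(X):=\limsup_{R\to\infty}\frac{1}{R}\log\#\{\gamma\in\Gamma: d_{\widetilde X}(\tilde x_0,\gamma\tilde x_0)\le R\},
$$
the critical exponent of $\Gamma$; with $\tilde x_0$ a lift of a singular point this counts exactly the geodesic arcs between lifts of singularities that one concatenates to form closed geodesics meeting $\Sigma$.

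Next I would use hypothesis (1). A square-tiled surface is a translation cover of $\mathbb R^2/\mathbb Z^2$ branched over one point, so every $X=gX_0$ covers the unimodular flat torus $\mathbb R^2/\Lambda$ with $\Lambda=g\mathbb Z^2$, and the equilateral surface is precisely the one lying over the hexagonal torus $\mathbb R^2/\Lambda_{\mathrm{hex}}$. Developing geodesic arcs into the plane and using the branching rule at the cone points (each side of a crossing subtends angle $\ge\pi$, the same rule at every singularity since the cone angles agree), the holonomy vectors of arcs form a fixed combinatorial set along the orbit and the arc through $v$ has length $|gv|$. Up to bounded combinatorial multiplicities this turns the Poincar\'e series $\sum_\gamma e^{-s\,d_{\widetilde X}(\tilde x_0,\gamma\tilde x_0)}$ into a lattice sum, so that $h(gX_0)$ becomes the critical exponent
$$
\delta(\Lambda)=\inf\Big\{s>0:\sum_{v\in\Lambda\setminus\{0\}}e^{-s|v|}<\infty\Big\}
$$
of the unimodular lattice $\Lambda=g\mathbb Z^2$, a function on the Teichm\"uller disk $SL(2,\mathbb R)X_0/SO(2)$ of Figure 2.

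The minimization then follows from a clean monotonicity. Because $e^{-s r}$ is completely monotone in $r^2$, Montgomery's theorem that the hexagonal lattice minimizes the Gaussian theta function among unimodular lattices, together with a Bernstein integral representation (the mechanism behind \cite{betermin}), gives the inequality
$$
\sum_{v\in\Lambda\setminus\{0\}}e^{-s|v|}\ \ge\ \sum_{v\in\Lambda_{\mathrm{hex}}\setminus\{0\}}e^{-s|v|}\qquad\text{for every }s>0.
$$
Since $\delta(\Lambda)$ is the threshold where this series diverges, the inequality forces $\delta(\Lambda)\ge\delta(\Lambda_{\mathrm{hex}})$ for all $g$, i.e. the entropy is minimized exactly at the hexagonal torus, which is the equilateral surface. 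The $SO(2)$-reduction and the extra symmetries of the equilateral point (the symmetry argument in the text) are consistent with this, giving criticality directly; the lattice inequality is what upgrades criticality to a global minimum.

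The hard part will be the reduction in the second step: making $h(gX_0)$ equal, or comparable with controllable error, to the critical exponent of a genuine lattice theta-type sum. Two issues must be controlled uniformly over the orbit. The distances $d_{\widetilde X}(\tilde x_0,\gamma\tilde x_0)$ in the branched CAT(0) cover are not literally lengths $|gv|$ of lattice vectors---geodesics may turn at several cone points---so one needs a comparison showing the resulting corrections do not affect the critical exponent. More seriously, concatenation at cone points produces combinatorial multiplicities $c_v$, and if these depended on the lattice shape they could spoil the clean inequality above. The crux is therefore to show that the exponential growth is governed purely by the lattice geometry $\{|gv|\}$, with multiplicities entering only as a shape-independent factor, so that the hexagonal optimality of the pure theta sum transfers to the entropy itself.
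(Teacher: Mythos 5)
There is a genuine gap, and it is a gap of scope rather than of technique. The statement you are asked to prove is the \emph{Conjecture}, which quantifies over the entire stratum $\mathcal H^1[k,n]$; the paper does not prove it and explicitly leaves it open. Your argument, from its first line, restricts to surfaces of the form $X=gX_0$ with $X_0$ square-tiled, i.e.\ to a single three-dimensional $SL(2,\mathbb R)$-orbit inside a stratum of strictly larger dimension. On that orbit your outline is essentially the paper's proof of Theorem \ref{main}: decompose saddle connection paths at the cone points (the paper's ``singular connections'', Lemma \ref{scpaths}), observe that the holonomy vectors of these pieces are exactly the lattice $A(\Lambda)$ with multiplicities $k+1$ and $k$ that depend only on the common cone angle and not on the lattice shape (Lemma \ref{char}), and then transfer Montgomery's optimality of the hexagonal lattice from the Gaussian sum $F_t$ to the exponential sum $f_t$ via Bernstein's theorem (B\'etermin's Proposition \ref{betterman}), concluding by monotonicity of $f_t$ in $t$. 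The two worries you flag at the end --- turning at several cone points, and shape-dependent multiplicities --- are precisely what hypothesis (1) eliminates, and the paper resolves them exactly as you hope.

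What your proposal does not do, and what no step of it could be adapted to do, is handle a surface in $\mathcal H^1[k,n]$ that is not in the orbit of a square-tiled surface. The entire reduction rests on $X$ being a translation cover of a flat torus $\mathbb R^2/\Lambda$ branched over one point, so that the holonomy vectors of singular connections form a unimodular lattice and the Poincar\'e series becomes a theta-type lattice sum. A generic surface in the stratum admits no such cover: its set of saddle-connection holonomy vectors is a discrete set with quadratic counting asymptotics but no group structure, the multiplicities with which a given holonomy vector occurs genuinely depend on the surface, and Montgomery's theorem has no analogue for such sets. So the inequality $f_t(L)\geq f_t(L_\Delta)$ has no counterpart, and criticality of the equilateral point cannot be upgraded to a global minimum over the stratum by this route. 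In short: you have (re)proved Theorem \ref{main}, by the same method as the paper, but the conjectured statement remains out of reach of this lattice-sum argument.
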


In Sections 2 and 3 we present some preliminary results on entropy.  In Section 4 we will present some more examples of surfaces satisfying the hypotheses  in (1).
In Sections 5 and 6 we introduce the main technical  ingredients in the proof:  Montgomery's and 
Bernstein's Theorems, respectively.  In Section 7 we complete the proof of Theorem \ref{main}. In Section 8 we derive a method for approximating the entropy of the types of surfaces we consider in this note.
In the final section we collect together some final comments and questions.


\section{Translation surfaces  and entropy}


Fix a translation surface $X$ with singularity set $\Sigma$.   
A saddle connection $s$ is a straight line on $X$ between two singularities (which does not contain a singularity in its interior).  


The entropy can be defined in terms of the growth of saddle connection paths, which are geodesics joining singularities. Let $i(s)$ and $t(s)$ denote the initial and terminal singularity, respectively, of an oriented saddle connection $s$.
Given a translation surface $X$, let  $\underline s = s_1 \ldots s_n$ denote an oriented   saddle connection 
path of length  $\ell(\underline s)$ where consecutive oriented saddle connections $s_i$ and $s_{i+1}$ form a locally distance minimizing geodesic.  In particular, the angle between $s_i$ and $s_{i+1}$, for $1 \leq i \leq n-1$, should be greater than or equal to $\pi$ on both sides.
  We write $\ell(\underline s) = \sum_{i=1}^n \ell(s_i)$ where $\ell(s_i)$ denotes the length of $s_i$.  
  The following definition is easily seen to be equivalent to the definition from the introduction. 

\begin{definition}\label{entropy}
The entropy  $h(X)$ of  a translation surface $X$ is given by the growth rate of 
saddle connection paths on $X$
$$
h(X) = \lim_{T \to +\infty}
\frac{1}{T} \log  \#\left\{
\underline s \hbox{ : } \ell(\underline s) \leq T
\right\}.
$$
\end{definition}
Whenever $\Sigma \neq \emptyset$ we have that $h(X) > 0$.

There is a useful  alternative formulation which we now present in the next lemma that follows from Definition \ref{entropy} (see \cite{kim}).

\begin{lemma}\label{diverge}
We can write
$$
h(X) = \inf\left\{ t > 0 \hbox{ : } \sum_{\underline s} \exp\left({-t \ell(\underline s)}\right) < +\infty \right\}
$$
where the summation is over all oriented saddle connection paths on $X$.
\end{lemma}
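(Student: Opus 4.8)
The plan is to relate the exponential sum (a Poincaré-type series) to the counting function for saddle connection paths and to show that the abscissa of convergence of this series coincides with the exponential growth rate defining the entropy. This is the standard correspondence between the critical exponent of a Dirichlet/Poincaré series and the growth rate of the underlying counting sequence, so the essential content is a Tauberian or elementary comparison argument.

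First I would introduce the counting function $N(T) = \#\{\underline s : \ell(\underline s) \leq T\}$, so that by Definition \ref{entropy} we have $h(X) = \limsup_{T\to\infty} \frac{1}{T}\log N(T)$ (the limit exists, but $\limsup$ is all I need for the comparison). I would then set $s(X) = \inf\{t>0 : \sum_{\underline s} e^{-t\ell(\underline s)} < \infty\}$ for the abscissa of convergence, and rewrite the Poincaré series as a Stieltjes integral against $dN$,
$$
\sum_{\underline s} e^{-t\ell(\underline s)} = \int_0^\infty e^{-tT}\, dN(T) = t\int_0^\infty e^{-tT} N(T)\, dT,
$$
the last equality by integration by parts, valid once one checks the boundary terms vanish for $t$ in the relevant range. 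The identity reduces the claim to understanding for which $t$ the Laplace transform of $N$ is finite.

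Next I would prove the two inequalities $h(X) \leq s(X)$ and $s(X) \leq h(X)$ separately. For $s(X) \leq h(X)$: if $t > h(X)$ then for any $\varepsilon$ with $h(X) < h(X)+\varepsilon < t$ we have $N(T) \leq C e^{(h(X)+\varepsilon)T}$ for all large $T$, and substituting into the integral shows $\int_0^\infty e^{-tT} N(T)\, dT < \infty$, hence the series converges and $t \geq s(X)$; letting $t \downarrow h(X)$ gives $s(X)\leq h(X)$. For $h(X)\leq s(X)$: if $t < h(X)$ then along a subsequence $T_j\to\infty$ we have $N(T_j) \geq e^{(h(X)-\varepsilon)T_j}$ with $t < h(X)-\varepsilon$, and the crude bound $\sum_{\underline s} e^{-t\ell(\underline s)} \geq e^{-tT_j} N(T_j) \to \infty$ forces divergence, so $t \leq s(X)$; letting $t \uparrow h(X)$ gives $h(X) \leq s(X)$. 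Combining the two inequalities yields $h(X) = s(X)$, which is the assertion of the lemma.

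The main obstacle I anticipate is not the comparison itself but the finiteness needed to make it meaningful: one must know that $N(T)$ is finite for each $T$ and that it grows at most exponentially, so that the Laplace transform converges for at least some $t$ and the series is not trivially infinite for every $t$. This is exactly the content of the remark in the introduction that $h(X)$ is always finite (citing \cite{dankwart}), together with the fact that saddle connection paths are locally distance-minimizing geodesics through singularities; I would invoke these to guarantee $N(T)<\infty$ and $h(X)<\infty$, after which the integration-by-parts and subsequence arguments are routine. The appeal to \cite{kim} suggests this correspondence is already recorded in the literature, so I would cite it for the finiteness input and present the equivalence of the two descriptions as the elementary Tauberian comparison above.
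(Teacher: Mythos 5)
Your proposal is correct and follows essentially the same route as the paper: both directions rest on the same elementary comparison between the Poincar\'e series and the counting function $N(T)$, with your Stieltjes/Laplace-transform formulation being just a continuous version of the paper's discrete shell decomposition over $n\leq \ell(\underline s)\leq n+1$. In particular, your divergence direction via $\sum_{\underline s}e^{-t\ell(\underline s)}\geq e^{-tR}N(R)$ is literally the paper's argument.
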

\begin{proof}
Let $h'$ be the infimal value of $t$ for which $\sum_{\underline s }e^{-t\ell(\underline s)}$ converges. Then for $t>h$,
$$
\begin{aligned}
\sum_{\underline s}e^{-t\ell(\underline s)}&=\sum_{n=0}^\infty \sum_{\underline s:n\leq \ell(\underline s)\leq n+1}e^{-t\ell(\underline s)}\cr
&\leq \sum_{n=0}^\infty  \#\left\{
\underline s \hbox{ : } \ell(\underline s) \leq n+1
\right\}e^{-tn}\cr
&=\sum_{n=0}^\infty e^{(h-t+o(1))n}.
\end{aligned}
$$
It follows that $\sum_{\underline s}e^{-t\ell(\underline s)}$ converges when $t>h$, hence $h\geq h'$.\\

Suppose that $h>h'$. Then because the set of $u$ for which 
$$\sum_{\underline s}e^{-u\ell(\underline s)}<\infty
$$
is an interval,
we can choose some $t\in (h',h)$ such that 
$$\sum_{\underline s}e^{-t\ell(\underline s)}<\infty.
$$
Then for $R>0$,

$$e^{tR}\sum_{\underline s}e^{-t\ell(\underline s)}=\sum_{\underline s}e^{t(R-\ell(\underline s))}\geq \sum_{\underline s:\ell(\underline s)\leq R}e^{t(R-\ell(\underline s))}.$$
For $\ell(\underline s)<R$, we have $e^{t(R-\ell(\underline s))}>1$, hence $$e^{tR}\sum_{\underline s}e^{-t\ell(\underline s)}\geq  \#\left\{
\underline s \hbox{ : } \ell(\underline s) \leq R
\right\}.$$ Taking the logarithm of both sides and letting $R$ tend to infinity, we obtain $t\geq h$, which gives a contradiction. Hence $h=h'$.
\end{proof}

\smallskip
We conclude this section by 
introducing a notational device for 
certain sequences of saddle connections that will be used in the proof of Lemma \ref{char} in the next section.
\begin{definition}
A \textit{singular} connection, $e$, is a finite sequence of saddle connections, i.e. $e=s_1\ldots s_n$, such that for $1\leq i<n$, $t(s_i)=i(s_{i+1})$ and the angle formed by starting at $s_i$ and moving clockwise\footnote{Translation surfaces have a well define notion of clockwise orientation at every point.} about $t(s_i)$ to $s_{i+1}$ is equal to $\pi$ (Figure \ref{fig:singularconnection}).

\end{definition}

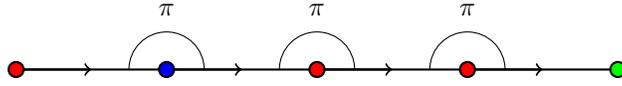
\begin{figure}[h]
    \centering
   \begin{tikzpicture}
   \draw[thick] (0,0)--(8,0);
   \draw[fill=red, thick] (0,0) circle (0.1);
     \draw[fill=blue, thick] (2,0) circle (0.1);
       \draw[fill=red, thick] (4,0) circle (0.1);
         \draw[fill=red, thick] (6,0) circle (0.1);
           \draw[fill=green, thick] (8,0) circle (0.1);
       \draw (2.5,0) arc (0:180:0.5);
    \draw (4.5,0) arc (0:180:0.5);
    \draw (6.5,0) arc (0:180:0.5);
    \node[below] at (4,1.0) {$\pi$};
        \node[below] at (2,1.0) {$\pi$};   
        \node[below] at (6,1.0) {$\pi$};
   \draw[thick,->] (0.1,0)--(1,0);
      \draw[thick,->] (2.1,0)--(3,0);
            \draw[thick,->] (4.1,0)--(5,0);
   \draw[thick,->] (6.1,0)--(7,0);

   \end{tikzpicture}
    \caption{Four saddle connections that form a single singular connection.}
    \label{fig:singularconnection}
\end{figure}
Let $\underline s=s_1\dots s_n$ be a saddle connection path. Consecutive saddle connections in $\underline s$ will join at an angle allowed by the condition that $\underline s$ is a geodesic, and so typically it will not be a singular connection.
However, exceptionally $\underline s$ may have substrings of saddle connections that form singular connections. Assuming that we only consider singular connections $e$ that are maximal with respect to $\underline s$ (i.e. there does not exist another substring $e'$ of $\underline s$ such that $e$ is a substring of $e'$ and $e'$ is also a singular connection), then $\underline s$ has a unique decomposition of the form $\underline s=e_1\ldots e_m$, where the $e_i$ are singular connections, and for $1\leq i<m$, $e_ie_{i+1}$ is not a singular connection (i.e., the clockwise angle between them is greater than $\pi$).


\begin{example}
For the $L$-shaped translation surface we can consider the saddle connection path $s_1s_2s_3s_3s_4$ illustrated in Figure 5.  
Since the clockwise angle between $s_1$ and $s_2$ is $\pi$, and the angle between $s_3$ and itself is $\pi$.
However, the angle between $s_2$ and $s_3$ is $\frac{5}{4}\pi$ and the angle between 
$s_3$ and $s_4$ is $\frac{3}{2}\pi$.  
Thus we  can  denote the singular connections  $e_1=s_1s_2$, $e_2=s_3s_3$ and $e_3=s_4$
and write $s_1s_2s_3s_3 s_4= e_1e_2 e_3$.
\end{example}

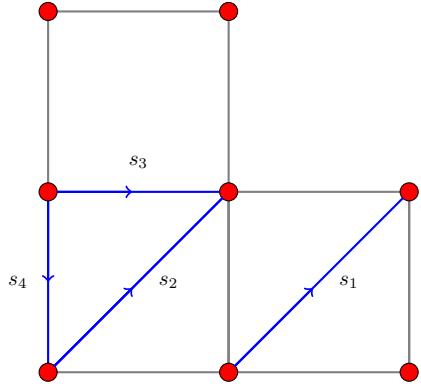
\begin{figure}[h!]
\begin{tikzpicture}[scale=0.8, every node/.style={scale=0.8}]
\draw[gray, thick] (0,0)--(6,0);
\draw[gray, thick] (3,0)--(3,6);
\draw[gray, thick] (0,3)--(0,6);
\draw[gray, thick] (3,0)--(3,3);
\draw[gray, thick] (6,0)--(6,3);
\draw[gray, thick] (3,6)--(0,6);
\draw[gray, thick] (3,3)--(6,3);
\draw[blue, thick,->] (0,3)--(0,0);
\draw[blue, thick,->] (0,3)--(0,1.5);
\draw[gray, thick] (3,0)--(3,3);
\draw[gray, thick] (6,0)--(6,3);
\draw[gray, thick] (6,9)--(6,9);
\draw [fill=red] (0,0) circle [radius=0.15];
\draw [fill=red] (0,3) circle [radius=0.15];
\draw [fill=red] (3,3) circle [radius=0.15];
\draw [fill=red] (3,6) circle [radius=0.15];
\draw [fill=red] (0,6) circle [radius=0.15];
\draw [fill=red] (3,0) circle [radius=0.15];
\draw [fill=red] (6,0) circle [radius=0.15];
\draw [fill=red] (6,3) circle [radius=0.15];
\draw[blue, thick,->] (3.1,0.1)--(4.4,1.4);
\draw[blue, thick] (3.1,0.1)--(5.9,2.9);
\node at (5,1.5) {$s_1$};
\draw[blue, thick,->] (0.1,0.1)--(1.4,1.4);
\draw[blue, thick] (0.1,0.1)--(2.9,2.9);
\node at (2,1.5) {$s_2$};
\draw[blue, thick] (0.15,3.0)--(2.85,3);
\draw[blue, thick,->] (0.15,3)--(1.4,3);
\draw[blue, thick] (0.1,0.1)--(2.9,2.9);
\node at (1.5,3.5) {$s_3$};
\node at (-0.5,1.5) {$s_4$};
\end{tikzpicture}
\caption{The saddle connection path $s_1s_2s_3s_3s_4=e_1e_2e_3$.
}
\end{figure}

\section{Entropy formula for surfaces in $\mathcal H^1[k,n]$ and their $SL(2,\mathbb R)$-orbits}
Fix $X_0\in\mathcal H^1[k,n]$, a unit area square tiled surface 
with $n$ singularities with cone angle $2\pi (k+1)$
satisfying the conditions in (1) in the introduction. Note that $X_0$ will be tiled by $n(k+1)$ squares.

In order to study surfaces $AX_0$, where $A \in SL(2, \mathbb R)$,  we can denote 
by 
$$\Lambda =  \left\{ \left(\frac{a}{\sqrt{n(k+1)}}, \frac{b}{\sqrt{n(k+1)}}\right)  \hbox{ : }
(a,b) \in \mathbb Z^2\setminus\{(0,0)\} \right\}$$
 the scaled standard lattice (minus the origin) and 
$A(\Lambda) = \{  \left(A\left(\begin{smallmatrix} p \\ q \end{smallmatrix}\right)\right)^T \hbox{ : } (p,q) \in \Lambda\}$ its image under the linear action of $A$.  
We can then associate  functions $f_t: SL(2, \mathbb R) \to \mathbb R^+$ for each $t > 0$ by
$$
f_t(A) = \sum_{\underline v \in A(\Lambda)} \exp\left(-t \|\underline v\|\right)
$$
where $ \|\underline v\|$ is the usual Euclidean length of $\underline v \in \mathbb R^2$.

\smallskip
We will relate the entropy  $h(AX_0)$ of $AX_0$ to $f_t(A)$ by taking advantage of the additional structure of the set of saddle connection paths on $AX_0$.

\begin{lemma}\label{scpaths}
Let $X_0 \in \mathcal H^1[k,n]$ satisfy the hypotheses in (1) .
Fix $A \in SL(2, \mathbb R)$.  There is a correspondence between the set of oriented saddle connection paths that admit a unique decomposition into $m$ singular connections, and 
$$
 \begin{aligned}
 &\left(x; j_1, (a_1,b_1);  j_2, (a_2,b_2);    \cdots ; j_m, (a_m,b_m)    \right) \cr
 &\in \Sigma \times 
 \underbrace{\left(
 \mathbb Z/{(k+1)}\mathbb Z \times A(\Lambda)\right)
 \times 
  \left(
 \mathbb Z/k\mathbb Z \times A(\Lambda)\right)
 \times \cdots
 \times 
  \left(
 \mathbb Z/k\mathbb Z \times A(\Lambda)\right)}_{\text{$m$ times}}.
 \end{aligned}
 $$
 Moreover, we can write 
$$
\ell(\underline s) = \sum_{i=1}^m \|(a_i,b_i)\|
\hbox{ where 
$\|(a_i,b_i)\| = \sqrt{a_i^2 + b_i^2}$.}
$$
\end{lemma}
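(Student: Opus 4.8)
The plan is to use the canonical decomposition of a saddle connection path into maximal singular connections introduced in Section~2, and to encode each singular connection by a single displacement vector in $A(\Lambda)$ together with a discrete ``sheet'' parameter that records the angular choice at its initial singularity.

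First I would record the dictionary between the flat geometry and the lattice: since $X_0$ is square tiled with every vertex a singularity, a saddle connection of $AX_0$ has holonomy vector equal to the $A$-image of a primitive vector of the scaled integer lattice, i.e.\ a primitive element of $A(\Lambda)$. A singular connection $e=s_1\ldots s_p$ runs straight through each intermediate singularity (interior clockwise angle exactly $\pi$), so its constituent saddle connections are collinear and equidirected; their total displacement is therefore a single, not necessarily primitive, vector $(a,b)\in A(\Lambda)$ whose norm is $\ell(e)=\|(a,b)\|$. This already yields the length formula $\ell(\underline s)=\sum_{i=1}^m \ell(e_i)=\sum_{i=1}^m \|(a_i,b_i)\|$ once the decomposition $\underline s=e_1\ldots e_m$ is in hand.

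Next I would build the correspondence term by term. The initial singularity $x=i(e_1)\in\Sigma$ is free, and $e_1$ is determined by its displacement $(a_1,b_1)\in A(\Lambda)$ together with a choice of which prong at $x$ it leaves along: since the cone angle is $2\pi(k+1)$, each planar direction is realised by exactly $k+1$ outgoing rays, giving the factor $\mathbb Z/(k+1)\mathbb Z$. For $i\geq 2$ the initial singularity of $e_i$ is forced to be $t(e_{i-1})$, so only the turning data survives. Here I would count the admissible outgoing rays at $t(e_{i-1})$: the geodesic condition forces the clockwise angle from the incoming direction into $[\pi,(2k+1)\pi]$, while maximality of the decomposition (that $e_{i-1}e_i$ is not itself a singular connection) excludes the value $\pi$, leaving the half-open interval $(\pi,(2k+1)\pi]$ of length $2k\pi$. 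For each fixed planar direction $(a_i,b_i)\in A(\Lambda)$ this interval meets the corresponding arithmetic progression of rays (spacing $2\pi$) in exactly $k$ points, producing the factor $\mathbb Z/k\mathbb Z$.

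Finally I would verify that the assignment is a bijection: given a tuple, the sheet parameter together with $(a_i,b_i)$ pins down an actual ray on $AX_0$, and travelling straight in that direction through the appropriate number of interior singularities realises a singular connection terminating at another singularity, so the path is reconstructed unambiguously; conversely the uniqueness of the maximal decomposition from Section~2 shows that distinct paths give distinct tuples. I expect the main obstacle to be the angle bookkeeping in the intermediate step, namely checking that the strict exclusion of $\pi$ together with the closed upper endpoint $(2k+1)\pi$ makes the half-open interval $(\pi,(2k+1)\pi]$ contain exactly $k$ rays (rather than $k\pm1$), and confirming that passing straight through an interior singularity on a square-tiled surface really does correspond to clockwise angle exactly $\pi$, so that a non-primitive element of $A(\Lambda)$ legitimately encodes a single singular connection.
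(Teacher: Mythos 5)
Your proposal is correct and follows essentially the same route as the paper: decompose into maximal singular connections, record each by its holonomy vector in $A(\Lambda)$ plus a sheet parameter at the cone point, with the two geodesic/maximality angle conditions cutting the $k+1$ rays per direction down to $k$ at each intermediate singularity. The only cosmetic differences are that the paper phrases the $k+1$ count via the degree-$(k+1)$ branched cover of the torus and organizes the argument as an induction on $m$, whereas you count rays at the cone point directly; the substance, including the half-open interval $(\pi,(2k+1)\pi]$ containing exactly $k$ rays, matches the paper's ``eliminate exactly one from each $(k+1)$-tuple.''
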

\begin{proof}
We will proceed by induction on $m$, the number of singular connections in the saddle connection path. 

We begin by looking at the set of saddle connection paths consisting of one singular connection. First note that the  square tiled surface $AX_0$ (which has area 1 and consists of $n(k+1)$ square tiles) covers the 
  torus,
  $(1/\sqrt{n(k+1)})A(\mathbb R^2/\mathbb Z^2)$, and around the singularities the covering map can be written in the form $w=z^{k+1}$ using local complex coordinates.
  Hence, each oriented singular connection based at some $x\in \Sigma$ projects onto an oriented closed geodesic on the aforementioned torus. The set of such oriented closed geodesics (up to homotopy) is in correspondence with $A(\Lambda)$ and their lengths are given by the lengths of the corresponding vectors. Each oriented closed geodesic on the torus lifts to $(k+1)$ oriented singular connections based at $x$ (the angle between any of the lifts based at $x$ will be equal to $2\pi$ (mod 1)). Hence the set of oriented saddle connection paths on $X$ that consist of one singular connection is in correspondence with $\Sigma \times \left(
 \mathbb Z/{(k+1)}\mathbb Z \times A(\Lambda)\right)$ and the length of the saddle connection path is the length of the corresponding vector in $A(\Lambda)$.
 
 Suppose we have an oriented saddle connection path $\underline s = e_1\ldots e_n$ consisting of $n$ singular connections, $e_i$. Let $e$ be an oriented singular connection. Then $\underline s e$ is a saddle connection path if, $t(\underline s)=i(e)$, the anticlockwise angle between $e_n$ and $e$ is greater than or equal to $\pi$ (see the beginning of Section 2), and the clockwise angle between them is greater than $\pi$ (see the end of Section 2). We have seen that the set of oriented singular connections that begin at a given singularity corresponds to $\left(\mathbb Z/{(k+1)}\mathbb Z \times A(\Lambda)\right)$, where the oriented singular connections can be grouped in $(k+1)$-tuples (corresponding to vectors in $A(\Lambda)$) such that each pair in the tuple forms an angle of $2\pi$ (mod 1) about the singularity. The two angle conditions together eliminate exactly one singular connection from every $(k+1)$-tuple and so the set of oriented singular connections that form a saddle connection path $\underline s e$ with $\underline s$, is in correspondence with $\left(\mathbb Z/{k}\mathbb Z \times A(\Lambda)\right)$. 
 
By using the induction hypothesis on $m=n$ and the above reasoning, it follows that the correspondence in the statement of the Lemma and expression for $\ell(\underline s)$ hold for $m=n+1$ and so we are done by induction.\end{proof}

The functions $f_t(\cdot)$ have the following useful properties.  

\begin{lemma}\label{char}
Let $X_0 \in \mathcal H^1[k,n]$ satisfy the hypotheses in (1) .
Fix $A \in SL(2, \mathbb R)$.  
\begin{enumerate}
\item
The function $f_t(A)$ is well defined and $\frac{\partial f_t(A)}{\partial t}<0$ for all $A\in SL(2,\mathbb R)$ and $t > 0$.
\item
 The entropy $h = h(AX_0)> 0$ is the unique solution to $f_h(A) = \frac{1}{k}$
 \end{enumerate}
\end{lemma}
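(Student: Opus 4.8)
The plan is to leverage Lemma \ref{scpaths}, which gives an explicit combinatorial parametrization of saddle connection paths, to rewrite the abstract series $\sum_{\underline s} e^{-t\ell(\underline s)}$ from Lemma \ref{diverge} as an explicit expression in terms of $f_t(A)$. First I would establish part (1). The function $f_t(A)=\sum_{\underline v\in A(\Lambda)}e^{-t\|\underline v\|}$ is a sum over the image of a rank-two lattice; since $A\in SL(2,\mathbb R)$ is invertible, $A(\Lambda)$ is again a lattice (minus the origin), so the number of points with $\|\underline v\|\leq R$ grows quadratically in $R$. The series $\sum e^{-t\|\underline v\|}$ therefore converges for every $t>0$ by comparison with $\int_0^\infty R\,e^{-tR}\,dR<\infty$, establishing that $f_t(A)$ is well defined. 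For the monotonicity, I would differentiate term-by-term, justified by uniform convergence on compact $t$-intervals: each term contributes $-\|\underline v\|e^{-t\|\underline v\|}<0$, so $\partial f_t(A)/\partial t<0$ for all $t>0$.

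For part (2), the key step is to use the parametrization from Lemma \ref{scpaths} to compute the generating series. Summing $e^{-t\ell(\underline s)}$ over all saddle connection paths decomposing into exactly $m$ singular connections, and using $\ell(\underline s)=\sum_{i=1}^m\|(a_i,b_i)\|$ together with the product structure of the index set, the sum factorizes. The first factor contributes $|\Sigma|\cdot(k+1)\cdot f_t(A)$ from the term $\Sigma\times(\mathbb Z/(k+1)\mathbb Z\times A(\Lambda))$, and each of the remaining $m-1$ factors contributes $k\cdot f_t(A)$ from the terms $\mathbb Z/k\mathbb Z\times A(\Lambda)$. Thus the $m$-th term of the series equals $|\Sigma|(k+1)f_t(A)\big(kf_t(A)\big)^{m-1}$, and summing the geometric series over $m\geq 1$ gives
\begin{equation}\label{eq:genseries}
\sum_{\underline s}e^{-t\ell(\underline s)}=\frac{|\Sigma|(k+1)f_t(A)}{1-kf_t(A)},
\end{equation}
valid precisely when $kf_t(A)<1$, i.e. $f_t(A)<1/k$.

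By Lemma \ref{diverge}, the entropy $h(AX_0)$ is the infimal $t$ for which the left side of \eqref{eq:genseries} converges. Since by part (1) the map $t\mapsto f_t(A)$ is strictly decreasing and continuous on $(0,\infty)$, with $f_t(A)\to+\infty$ as $t\to 0^+$ (the lattice has points arbitrarily close to the origin contributing terms near $1$, and in fact the count forces divergence of $f_t$ as $t\to0$) and $f_t(A)\to 0$ as $t\to+\infty$, there is a unique $t=h$ at which $f_t(A)$ crosses the value $1/k$. For $t>h$ we have $f_t(A)<1/k$ and the series \eqref{eq:genseries} converges; for $t<h$ we have $f_t(A)>1/k$ and it diverges. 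Hence $h=h(AX_0)$ is the unique solution to $f_h(A)=1/k$, and $h>0$ because $f_t(A)\to0$ only as $t\to\infty$ while $f_t(A)>1/k$ near $0$.

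I expect the main obstacle to be the careful justification of the factorization of the generating series in \eqref{eq:genseries}: one must verify that the decomposition into singular connections provided by Lemma \ref{scpaths} is genuinely a bijection (so that no path is over- or under-counted) and that the length additivity holds exactly, so that the sum over the product index set legitimately factors as a product of copies of $f_t(A)$. The boundary analysis of the limiting behavior of $f_t(A)$ as $t\to 0^+$ and $t\to\infty$, needed to guarantee existence and uniqueness of the crossing point, is routine but must be stated precisely to conclude $h>0$.
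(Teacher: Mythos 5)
Your proposal is correct and takes essentially the same route as the paper: both use Lemma \ref{scpaths} to factor the generating series $\sum_{\underline s}e^{-t\ell(\underline s)}$ into a geometric series in $kf_t(A)$ with leading factor $n(k+1)f_t(A)$, and then combine Lemma \ref{diverge} with the strict monotonicity of $t\mapsto f_t(A)$ to identify the entropy as the unique solution of $f_h(A)=\tfrac1k$. (Only a minor slip: a lattice has no points arbitrarily close to the origin, but your fallback argument---divergence of $f_t(A)$ as $t\to0^+$ from the quadratic growth of the lattice point count---is the correct justification and suffices.)
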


\begin{proof}
Part 1 follows from the definition of $f_t(A)$.

For part 2, we can use Lemma \ref{diverge} and Lemma \ref{scpaths} to write
$$
\begin{aligned}
\sum_{\underline s} \exp\left({-t \ell(\underline s)}\right)
& = n (k+1) \sum_{\underline v \in A(\Lambda)} \exp\left(-t \ell(\underline v)\right)
\sum_{m=0}^\infty \left(
k \sum_{\underline v \in A(\Lambda)} \exp\left(-t \ell(\underline v)\right)
\right)^m \cr
&=  n(k+1) f_t(A)
\sum_{m=0}^\infty \left( k
f_t(A)
\right)^m\cr
&= n\frac{(k+1) f_t(A)}{1 - kf_t(A)}
\end{aligned}
$$
provided $f_t(A) < \frac{1}{k}$.  Moreover, since $f_t(A)$ is strictly decreasing in $t$,  by part 1 of 
Lemma \ref{char}, then  Lemma \ref{diverge} and the above identity, the proof is completed.
\end{proof}

Lemma \ref{char} gives a particularly useful characterization of the entropy.  As a first application we have the following.

\begin{corollary} 
Let $X_0 \in \mathcal H^1[k,n]$ satisfy the hypotheses in (1).
Then the  entropy function $h: SL(2,\mathbb R) X_0 \to \mathbb R^+$ is real analytic when restricted to the orbit $SL(2,\mathbb R) X_0$.
\end{corollary}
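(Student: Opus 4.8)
The plan is to realize the entropy as an implicitly defined function and then invoke the real-analytic implicit function theorem. By part (2) of Lemma~\ref{char} the entropy $h(AX_0)$ is the unique solution $t=h$ of the equation $F(A,t):=f_t(A)-\tfrac1k=0$, while by part (1) we have $\partial_t F(A,t)=\partial_t f_t(A)<0$ for all $A$ and all $t>0$. Thus, \emph{provided} $F$ is jointly real analytic in $(A,t)$, the analytic implicit function theorem immediately yields that $A\mapsto h(AX_0)$ is real analytic on $SL(2,\mathbb{R})$, with the nonvanishing of $\partial_t F$ being exactly the nondegeneracy hypothesis required. Everything therefore reduces to proving that $(A,t)\mapsto f_t(A)$ is real analytic on $SL(2,\mathbb{R})\times(0,\infty)$, which I expect to be the main obstacle.

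To establish this I would complexify and apply the Weierstrass $M$-test together with the fact that a locally uniform limit of holomorphic functions is holomorphic. Regarding $A$ through its matrix entries, set $Q(Z,\underline v)=\underline v^{\,T}Z^{T}Z\,\underline v$, a polynomial in the entries of a $2\times2$ matrix $Z$ which equals $\|A\underline v\|^2$ when $Z=A$ is real. Fix $A_0\in SL(2,\mathbb{R})$ and $t_0>0$; for complex $Z=A_0+W$ and complex $\zeta=t_0+\tau$ in small balls about $A_0$ and $t_0$, the crucial estimate is that, uniformly in $\underline v$,
\[
Q(Z,\underline v)=\|A_0\underline v\|^2\bigl(1+\eta(\underline v)\bigr),\qquad |\eta(\underline v)|\le \tfrac12,
\]
which follows from $|Q-\|A_0\underline v\|^2|\le K\|W\|\,\|\underline v\|^2$ and the lower bound $\|A_0\underline v\|^2\ge \sigma_{\min}(A_0)^2\|\underline v\|^2$, the latter being strictly positive because $\det A_0=1$. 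Consequently $Q$ stays off the branch cut of the principal square root, so $\sqrt{Q}$ is holomorphic in $Z$ and $\operatorname{Re}\sqrt{Q}\ge c_0\|\underline v\|$; combined with $\operatorname{Re}\zeta\ge t_0/2$ this gives $\operatorname{Re}\bigl(\zeta\sqrt{Q}\bigr)\ge c\,\|\underline v\|$ for some $c>0$ on the whole polydisc. Each term then obeys $\bigl|\exp(-\zeta\sqrt{Q(Z,\underline v)})\bigr|\le e^{-c\|\underline v\|}$, and since $\sum_{\underline v\in\Lambda}e^{-c\|\underline v\|}<\infty$ (a convergent lattice sum), the complexified series converges uniformly on the polydisc to a holomorphic function. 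Restricting to real $Z=A$ and real $\zeta=t$ shows $f_t(A)$ is real analytic near $(A_0,t_0)$, and as $(A_0,t_0)$ was arbitrary, $f$ is jointly real analytic on $SL(2,\mathbb{R})\times(0,\infty)$.

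With joint analyticity in hand, the analytic implicit function theorem applied to $F(A,t)=0$ at $(A,h(AX_0))$, using $\partial_t F<0$, produces a real analytic map $A\mapsto h(AX_0)$ on $SL(2,\mathbb{R})$. It remains to descend to the orbit. Since $X_0$ is a square-tiled (hence Veech) surface, its stabilizer in $SL(2,\mathbb{R})$ is discrete, so the orbit map $\pi:SL(2,\mathbb{R})\to SL(2,\mathbb{R})X_0$, $A\mapsto AX_0$, is a real analytic covering onto the orbit equipped with its natural real analytic structure. As $A\mapsto h(AX_0)$ factors as $h\circ\pi$ and $\pi$ is a local real analytic diffeomorphism, composing locally with a branch of $\pi^{-1}$ shows that $h$ is itself real analytic on $SL(2,\mathbb{R})X_0$, completing the argument. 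The only delicate point is the uniform-in-$\underline v$ estimate keeping $Q$ off the square-root branch cut; once that is secured, the remaining steps are formal.
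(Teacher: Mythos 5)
Your proposal follows essentially the same route as the paper: both characterize $h(AX_0)$ implicitly via $f_{h}(A)=\tfrac1k$ from Lemma~\ref{char} and conclude by the (analytic) Implicit Function Theorem, using $\partial_t f_t(A)<0$ for nondegeneracy. The only difference is that you supply the details the paper leaves implicit --- the complexification and uniform lattice-sum estimate establishing joint real analyticity of $f_t(A)$, and the descent from $SL(2,\mathbb{R})$ to the orbit --- and these details are correct.
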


\begin{proof}
In the definition of $f_t(A)$ the dependence of the saddle connections on $A$ is real analytic. 
The function also has an analytic dependence on $t$. 
 By Lemma \ref{char} we see that $h(AX_0)$
 satisfies $f_{h(A X_0)}(AX_0) = \frac{1}{k}$
 and  then applying the Implicit Function Theorem gives the result.
\end{proof}


\section{Examples}

We are interested in square tiled surfaces where the vertex of  each square is a singularity with a common cone angle.  We note that any square tiled surface consisting of $N$ square tiles can be represented by a pair of permutations $(h,v)\in Sym(N)\times Sym(N)$, where $h,v$ represent the gluings of the horizontal and vertical edges of the squares, respectively (see \cite{Schmitusen}). We recall that for translation surfaces $X \in \mathcal H^1(k_1, \ldots, k_n)$ the genus $g$ satisfies
$2g - 2 = \sum_{i=1}^n k_i$ \cite{wright}.
We can consider a few simple examples.

\begin{example}[$O_k$, $k \geq 2$, $h=(1,2,\ldots 2k)$, $v=(1,2)(3,4)\ldots(2k-1,2k)$, 
 see  Figure \ref{fig:ex1}]
This is a translation surface of genus $k$
with two singularities each with cone angle 
$2\pi k$
 (see \cite{Schmitusen}, Definition 5.3, p.53). 
\begin{figure}[h!]
\centerline{
\begin{tikzpicture}[scale=0.5, every node/.style={scale=0.7}]
\draw[gray, thick] (0,0)--(7,0);
\draw[gray, thick] (10,0)--(17,0);
\draw[gray, thick, dashed] (7,0)--(10,0);
\draw[gray, thick] (0,3)--(7,3);
\draw[gray, thick] (10,3)--(17,3);
\draw[gray, thick, dashed] (7,3)--(10,3);
\draw[gray, thick] (0,0)--(0,3);
\draw[gray, thick] (3,0)--(3,3);
\draw[gray, thick] (6,0)--(6,3);
\draw[gray, thick] (11,0)--(11,3);
\draw[gray, thick] (14,0)--(14,3);
\draw[gray, thick] (17,0)--(17,3);
\draw [fill=black] (0,0) circle [radius=0.15];
\draw [fill=red] (0,3) circle [radius=0.15];
\draw [fill=black] (3,3) circle [radius=0.15];
\draw [fill=red] (3,0) circle [radius=0.15];
\draw [fill=black] (6,0) circle [radius=0.15];
\draw [fill=red] (6,3) circle [radius=0.15];
\draw [fill=black] (17,0) circle [radius=0.15];
\draw [fill=red] (17,3) circle [radius=0.15];
\draw [fill=black] (14,3) circle [radius=0.15];
\draw [fill=red] (14,0) circle [radius=0.15];
\draw [fill=black] (11,0) circle [radius=0.15];
\draw [fill=red] (11,3) circle [radius=0.15];
\node at (1.5,-0.4) {$1$};
\node at (4.5,-0.4) {$2$};
\node at (12.5,-0.4) {$2k-1$};
\node at (15.5,-0.4) {$2k$};
\node at (1.5,3.4) {$2$};
\node at (4.5,3.4) {$1$};
\node at (12.5,3.4) {$2k$};
\node at (15.5,3.4) {$2k-1$};
\node at (-0.4,1.5) {$a$};
\node at (17.4,1.5) {$a$};
\end{tikzpicture}
}
\caption{A translation surface of genus $k$ and two singularities.}
\label{fig:ex1}

\end{figure}
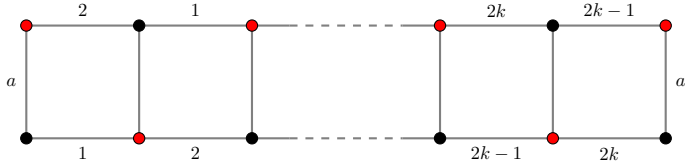
\end{example}

We can next consider two different types of stair examples.

\begin{example}[$St_k = E_{2k-1}$, $k \geq 2$, $h=(1,2)\ldots(2k-3, 2k-2)$, $v=(2,3)\ldots(2k-2,2k-1)$]
This is a translation surface of genus $k$
with one singularity with cone angle 
$2\pi (2k-1)$
(see \cite{Schmitusen}, Definition 5.10, p.61). 

The special case $k=2$ corresponds to Example 1.2.
\end{example}

\begin{example}[$G_k = E_{2k}$, $k \geq 2$, $h=(1,2)\ldots(2k-1, 2k)$, $v=(2,3)\ldots (2k-2, 2k-1)$]
This is a translation surface of genus $k$
with two  singularities each with cone angle 
$2\pi k$
 (see \cite{Schmitusen}, Definition 5.8, p.59).

\end{example}

Finally, we can consider a well known example of Forni \cite{forni} and Herrlich-Schmith\"usen \cite{HS}.

\begin{example}[Eierlegende Wollmilchsau\footnote{This literally translates as ``egg-laying wool-milk-pig'' and is  a reference to the many different useful properties this example has.}, $h = (1,2,3,4)(5,6,7,8),$ $v = (1,6)(2,5)(3,8)(4,7)$, see  Figure \ref{fig:ex2}]
This is a translation surface of genus $3$
with four   singularities each with cone angle $4\pi $.

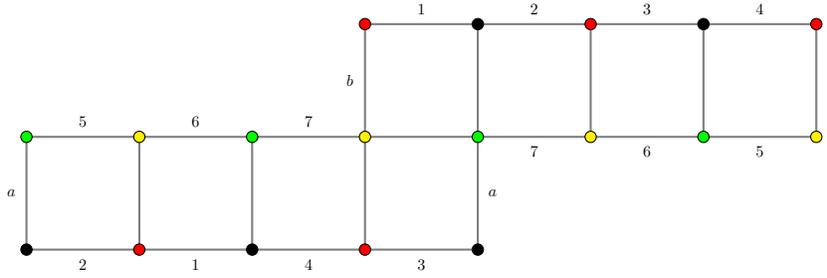
\begin{figure}[h!]
\begin{tikzpicture}[scale=0.5, every node/.style={scale=0.6}]
\draw[gray, thick] (0,0)--(12,0);
\draw[gray, thick] (0,3)--(21,3);
\draw[gray, thick] (9,6)--(21,6);
\draw[gray, thick] (0,0)--(0,3);
\draw[gray, thick] (3,0)--(3,3);
\draw[gray, thick] (6,0)--(6,3);
\draw[gray, thick] (9,0)--(9,3);
\draw[gray, thick] (12,0)--(12,3);
\draw[gray, thick] (15,3)--(15,6);
\draw[gray, thick] (18,3)--(18,6);
\draw[gray, thick] (21,3)--(21,6);
\draw[gray, thick] (12,3)--(12,6);
\draw[gray, thick] (9,3)--(9,6);
\draw [fill=black] (0,0) circle [radius=0.15];
\draw [fill=red] (3,0) circle [radius=0.15];
\draw [fill=black] (6,0) circle [radius=0.15];
\draw [fill=red] (9,0) circle [radius=0.15];
\draw [fill=black] (12,0) circle [radius=0.15];
\draw [fill=red] (9,6) circle [radius=0.15];
\draw [fill=black] (12,6) circle [radius=0.15];
\draw [fill=red] (15,6) circle [radius=0.15];
\draw [fill=black] (18,6) circle [radius=0.15];
\draw [fill=red] (21,6) circle [radius=0.15];
\draw [fill=green] (0,3) circle [radius=0.15];
\draw [fill=green] (6,3) circle [radius=0.15];
\draw [fill=green] (12,3) circle [radius=0.15];
\draw [fill=green] (18,3) circle [radius=0.15];
\draw [fill=yellow] (3,3) circle [radius=0.15];
\draw [fill=yellow] (9,3) circle [radius=0.15];
\draw [fill=yellow] (15,3) circle [radius=0.15];
\draw [fill=yellow] (21,3) circle [radius=0.15];
\node at (1.5,-0.4) {$2$};
\node at (4.5,-0.4) {$1$};
\node at (7.5,-0.4) {$4$};
\node at (10.5,-0.4) {$3$};
\node at (1.5,3.4) {$5$};
\node at (4.5,3.4) {$6$};
\node at (7.5,3.4) {$7$};
\node at (13.5,2.6) {$7$};
\node at (16.5,2.6) {$6$};
\node at (19.5,2.6) {$5$};
\node at (10.5,6.4) {$1$};
\node at (13.5,6.4) {$2$};
\node at (16.5,6.4) {$3$};
\node at (19.5,6.4) {$4$};
\node at (-0.4,1.5) {$a$};
\node at (12.4,1.5) {$a$};
\node at (8.6,4.5) {$b$};
\node at (21.4,4.5) {$b$};
\end{tikzpicture}
\caption{A translation  surface of genus $3$ and $4$ singularities.}
\label{fig:ex2}
\end{figure}
\end{example}

\section{Montgomery's theorem}

In order to analyze $f_t(\cdot)$, and thus use Lemma \ref{char} to study the entropy, 
it is convenient to first study a related function $F_t(L)$, where $L$ is a unimodular  lattice, i.e., 
of the form $A(\Lambda)$ for some $A \in SL(2, \mathbb R)$.
Throughout we consider the lattices  up to rotation.
In particular, this will allow us to use a result of Montgomery.


\begin{definition}
We can associate to a unimodular  lattice $L$
and $t > 0$ the function
$$
F_t(L) = \sum_{p \in L} \exp\left(- t \|p\|^2\right)
$$
where
$\|p\|$ denotes the Euclidean norm.
\end{definition}

We see that $F_t(L) $ is finite provided $t>0$.
Moreover, on this domain 
the function 
$F_t(L)$
has a smooth dependence on $t$ and $L$. 
The next  result   describes lattices which minimize   $F_t(L)$
 \cite{montgomary} (see also \cite{betermin-thesis}, Appendix A).
 Let $L_\Delta$ denote the  equilateral triangular lattice.

\begin{proposition}[Montgomery's Theorem]\label{mont}
For each $t > 0$ and all (unimodular) lattices  $L$ we have that 
$F_t(L) \geq F_t(L_\Delta)$,  with equality iff $L=L_\Delta$.
\end{proposition}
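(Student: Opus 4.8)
The plan is to follow Montgomery's strategy and reduce the statement to the analysis of a single real-analytic function on a fundamental domain for the modular group. Up to rotation every unimodular lattice equals $L_\tau = y^{-1/2}(\mathbb{Z}+\tau\mathbb{Z})$ for a unique $\tau = x+iy$ in the standard domain $\mathcal{F}=\{\tau\in\mathbb{H} : |\tau|\geq 1,\ |x|\leq \tfrac12\}$, and a direct computation gives
\[
F_t(L_\tau)=\sum_{(m,n)\in\mathbb{Z}^2}\exp\!\left(-\tfrac{t}{y}\big((m+nx)^2+n^2y^2\big)\right).
\]
Relabelling a basis does not change the lattice, so $\tau\mapsto F_t(L_\tau)$ is invariant under the action of $PSL(2,\mathbb{Z})$ (and under $x\mapsto-x$), and it is enough to minimise over $\mathcal{F}$. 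The equilateral triangular lattice $L_\Delta$ corresponds to the corner $\tau_0=\tfrac12+\tfrac{\sqrt3}{2}i=e^{i\pi/3}$; the only other elliptic point of $\mathcal{F}$ is $\tau=i$ (the square lattice). Both are fixed by a nontrivial rotation in the $PSL(2,\mathbb{Z})$ action and are therefore automatically critical points of any invariant smooth function, so the content of the theorem is that $\tau_0$ is the global minimum.

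I would then apply Poisson summation in the index $m$ to obtain the rapidly convergent dual form
\[
F_t(L_\tau)=\sqrt{\tfrac{\pi y}{t}}\sum_{(n,k)\in\mathbb{Z}^2}\exp\!\left(-tn^2y-\tfrac{\pi^2k^2y}{t}\right)\cos(2\pi knx),
\]
which displays the dependence on $x$ explicitly. It is also convenient to record the functional equation obtained by passing to the dual lattice: the theta series is self-dual at the balanced value of the parameter, and this symmetry (which exchanges small and large $t$, fixing $L_\Delta$) lets me restrict to a half-range of the parameter in which the sum is dominated by its shortest vectors.

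The argument then splits into two monotonicity claims. First, for fixed $y$ I would show that $x\mapsto F_t(L_\tau)$ is decreasing on $[0,\tfrac12]$, so that on every admissible horizontal slice of $\mathcal{F}$ the minimum lies on the vertical edge $x=\tfrac12$. Second, along the ray $x=\tfrac12$, $y\geq\tfrac{\sqrt3}{2}$, I would show that $y\mapsto F_t(L_{1/2+iy})$ is increasing, so that its minimum over the ray is at the endpoint $y=\tfrac{\sqrt3}{2}$, i.e.\ at $\tau_0$. Together these locate the minimum of $F_t$ over $\mathcal{F}$ at $\tau_0$; since every unimodular lattice corresponds to a unique point of $\mathcal{F}$ and the inequalities above are strict away from $\tau_0$, the minimiser $L_\Delta$ is unique, giving the equality case.

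The main obstacle is the first monotonicity claim, the minimisation in $x$. It does not follow from termwise positivity: although the coefficients in the cosine representation are positive, the frequencies $kn$ couple the two indices, so $\partial_x F_t$ is a sine series whose sign is genuinely delicate. Here I would exploit the precise structure of the one-dimensional theta factor $\sum_m e^{-(t/y)(m+u)^2}$ — in particular its infinite product representation, from which one reads off that it is decreasing in $u$ on $[0,\tfrac12]$ — together with the self-duality above to pass to the regime where the shortest-vector contributions dominate and the remaining terms can be controlled. This is the technical heart of Montgomery's theorem and is where essentially all the work lies; by comparison the reduction to $\mathcal{F}$ and the monotonicity in $y$ along $x=\tfrac12$ are routine.
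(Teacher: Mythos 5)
Your proposal follows essentially the same route as the paper, which (in the remark accompanying the proposition) reduces to Montgomery's two monotonicity statements on the standard fundamental domain --- $\partial F_t/\partial x<0$ for $0<x<\tfrac{1}{2}$, and $\partial F_t/\partial y\geq 0$ on $x=\tfrac{1}{2}$, $y\geq\tfrac{\sqrt{3}}{2}$, with equality only at the elliptic points --- and then concatenates a horizontal and a vertical path to reach $\tau_0=\tfrac12+\tfrac{\sqrt3}{2}i$. Like the paper, you do not actually establish the delicate monotonicity in $x$ (the paper simply cites Montgomery for both monotonicity properties), so your write-up is an accurate reconstruction of the argument the paper relies on rather than a self-contained proof, and is correct at that level.
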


\begin{remark}[Comment on the proof of Proposition \ref{mont}]
There is a standard correspondence between unimodular  lattices $L$ in $\mathbb R^2$  and the standard Modular domain, i.e., $z=x+iy$ with $ - \frac{1}{2} \leq x <  \frac{1}{2}$  
and $|z|\geq 1$, with suitable identifications on the boundary.  Let us denote $L = L_z$.
Let $L_\Delta = \mathbb Z + z \mathbb Z$ be the equilateral  triangular lattice with $z =
\frac{1}{2}+ i  \frac{\sqrt{3}}{2} $.
The work of Montgomery established, in particular,  the following properties:
\begin{enumerate}
\item
If
$0 < x < \frac{1}{2}$ and $y \geq \frac{1}{2}$ then $\frac{\partial F_t(L_z)}{\partial x} < 0$ for all $t>0$; and
\item
If
$0 \leq x \leq \frac{1}{2}$ and $x^2 + y^2  \geq 1$  then $\frac{\partial F_t(L_z)}{\partial y} \geq 0$ for all $t>0$
with equality iff $(x,y) = (0,1)$ or $(\frac{1}{2}, \frac{\sqrt{3}}{2})$, i.e., the ramification points on the modular surface.
\end{enumerate}
By  the definitions we have $F_t(L_{x+iy}) = F_t(L_{-x+iy})$ and so we can assume without loss of generality 
that $0 \leq x \leq \frac{1}{2}$.   
  Thus   given a lattice $L_z$ we consider a  path  consisting of a straight line path from $z= x+iy$ to $\frac{1}{2}+iy$ and then 
  a straight line path from  $\frac{1}{2}+iy$ 
  to  $\frac{1}{2}+i\frac{\sqrt{3}}{2}$ along which $F_t(L_{x+iy})$ decreases for any $t>0$ (by (1) and (2), respectively).
\end{remark}


\section{Bernstein's theorem}

To proceed we need  to relate $F_t(\cdot )$ and $f_t(\cdot)$.  This requires a result of Bernstein on completely monotone functions.

\begin{definition}
We call
a smooth function $\psi: \mathbb R^+ \to \mathbb R^+$  \emph{ completely monotone}
if for all $x > 0$:
$$
(-1)^n \frac{d^n\psi}{dx^n} (x) < 0.
$$
\end{definition}

We will be particularly interested in the following example.

\begin{example} The function $\psi(x) = \exp\left(-\sqrt{x}\right)$ is completely monotone (see the corollary on p. 391 of \cite{ms}).  
More generally, given  $\psi_1(x)$ and $\psi_2(x)$ with $\psi_1$ and $\psi_2'$ completely monotone  one has that the composition $\psi_1 \circ \psi_2$ is completely monotone (see Theorem 1 in \cite{ms}).
  We can apply this result with $\psi_1(x)  = \exp\left(-x\right)$ and $\psi_2(x) = x^{\frac{1}{2}}$.
\end{example}

The interest in completely monotone functions is that they are the Laplace transform of positive functions, as is shown in the following classical theorem \cite{bern}.

\begin{proposition}[Bernstein's Theorem]\label{bernstein}
If $\psi$ is completely monotone, then there exists a finite positive Borel measure  $\mu$ on $[0, +\infty)$
such that
$$
\psi(r) = \int_0^\infty \exp\left(-ru\right) d\mu(u).$$
\end{proposition}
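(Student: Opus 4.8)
The plan is to establish the nontrivial implication, namely that a completely monotone $\psi$ is the Laplace transform of a positive measure; the converse is immediate, since differentiating under the integral sign gives $(-1)^n\psi^{(n)}(r)=\int_0^\infty u^n e^{-ru}\,d\mu(u)\geq 0$. I would produce the representing measure $\mu$ as a weak-$*$ limit of explicit approximating measures read off from the values of $\psi$, and then show the Laplace representation survives the limit. The cleanest way to organize this, isolating the real content, is to reduce to the discrete \emph{Hausdorff moment problem}.

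First I would discretize. Fix a step $h>0$ and sample $\psi$ along the arithmetic progression $kh$, setting $c_k=\psi(kh)$ for $k\geq 0$. Writing $\Delta$ for the forward difference in the index $k$, the mean value theorem for finite differences gives $(\Delta^j c)_k=h^j\psi^{(j)}(\xi)$ for some $\xi>0$, so the sign conditions defining complete monotonicity of $\psi$ pass to the sampled sequence as $(-1)^j(\Delta^j c)_k\geq 0$ for all $j,k$; that is, $(c_k)$ is a completely monotone sequence. By Hausdorff's theorem such a sequence is a moment sequence: there is a positive measure $\nu_h$ on $[0,1]$ with $c_k=\int_0^1 x^k\,d\nu_h(x)$. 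Pushing forward under $x=e^{-hu}$ turns this into $\psi(kh)=\int_0^\infty e^{-hku}\,d\mu_h(u)$ for a positive measure $\mu_h$ on $[0,\infty)$, so the desired representation already holds along the grid $h\mathbb{Z}_{\geq 0}$.

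Next I would pass to the limit $h\to 0$. Taking $k=0$ shows $\mu_h([0,\infty))=\psi(0^+)$, which is finite exactly when $\psi$ is bounded near $0$ — the case relevant here, since the $\psi$ to which this is applied is bounded by $1$; this uniform mass bound lets Helly's selection theorem extract a subsequence $h\to 0$ along which $\mu_h$ converges vaguely to a finite positive measure $\mu$ on $[0,\infty)$. Since the grid points become dense and $\psi$ is continuous, for fixed $r>0$ I choose $k=k(h)$ with $k h\to r$ and pass to the limit in $\psi(k h)=\int_0^\infty e^{-k h u}\,d\mu_h(u)$ to obtain $\psi(r)=\int_0^\infty e^{-ru}\,d\mu(u)$. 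Uniqueness of the representing measure (a continuous function determines the measure whose Laplace transform it is) guarantees that all subsequential limits agree, so $\mu$ is well defined.

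The main obstacle is this joint limit, in which $h\to 0$, $k\to\infty$, and $kh\to r$ simultaneously, so that both the integrand and the integrating measure move at once. I would handle it by a triangle inequality: replacing $e^{-khu}$ by $e^{-ru}$ costs at most $\|e^{-khu}-e^{-ru}\|_\infty\cdot\mu_h([0,\infty))$, which vanishes because $e^{-au}\to e^{-ru}$ uniformly on $[0,\infty)$ as $a\to r>0$ while the masses stay uniformly bounded, after which vague convergence closes the argument since $e^{-ru}$ lies in $C_0([0,\infty))$ and hence detects no escape of mass to $u=+\infty$. The remaining ingredient to be supplied is Hausdorff's discrete theorem itself, whose proof again rests on Bernstein-polynomial approximation together with a Helly extraction, so the same compactness idea is in effect used twice. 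An alternative route that avoids the discretization is Widder's real inversion, taking $\mu$ as the weak limit of the measures $\frac{(-1)^n}{n!}(n/u)^{n+1}\psi^{(n)}(n/u)\,du$; this trades the appeal to Hausdorff's theorem for the Post--Widder inversion estimate, but the analytic difficulty of justifying the limit is of the same nature.
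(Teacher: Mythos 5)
The paper does not actually prove this proposition: it is quoted as a classical result, with the argument deferred to Bernstein \cite{bern} and to Widder \cite{widder} (Chapter IV, \S 12), so there is no internal proof to compare yours against. What you have written is a sketch of the standard proof, and it is essentially the route taken in the cited reference: reduce to the Hausdorff moment problem by sampling $\psi$ on a grid, check that $(\psi(kh))_k$ is a completely monotone sequence via the mean value theorem for finite differences, push the Hausdorff measure forward under $x=e^{-hu}$, and remove the discretization by a Helly/vague-compactness argument. Your treatment of the delicate joint limit ($h\to 0$, $kh\to r$) via the uniform estimate $\sup_{u\ge 0}|e^{-au}-e^{-ru}|\to 0$ as $a\to r>0$, combined with the uniform mass bound, is correct. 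Two small points deserve care. First, the substitution $x=e^{-hu}$ sends $u=+\infty$ to $x=0$, so an atom of $\nu_h$ at $x=0$ has no preimage in $[0,\infty)$ and must simply be discarded; this is harmless because such an atom contributes only to $c_0$ and not to $e^{-ru}$ for any $r>0$ (the same phenomenon as mass escaping to $+\infty$ in the Helly step, which you rightly note is invisible to $e^{-ru}\in C_0$). Second, finiteness of $\mu$ genuinely requires $\psi$ to be bounded near $0$ --- for $\psi(x)=1/x$, completely monotone on $(0,\infty)$, the representing measure is Lebesgue measure --- so the proposition as stated carries that implicit hypothesis; you correctly flag this and observe that it holds for $\psi(x)=e^{-\sqrt{x}}\le 1$, the only function to which the paper applies the result. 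The one substantial ingredient you leave unproved is Hausdorff's moment theorem itself (equivalently the Bernstein-polynomial positivity argument), but you state accurately what its proof requires, and since the paper treats the entire proposition as a citation, stopping at that classical input is a reasonable endpoint.
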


An account  appears, for example, in  the book of Widder (see Chapter IV, \S12 \cite{widder}).

\section{Proof of Theorem \ref{main}}


We want to use use Proposition \ref{bernstein} to convert Proposition \ref{mont} for $F_t(\cdot )$ into the  corresponding result for $f_t(\cdot)$, in Proposition \ref{betterman} below.

\begin{proposition}[B\'etermin]\label{betterman}
For each $t > 0$  and all lattices $L$   we have that     $f_t(L) \geq f_t(L_\Delta)$ with equality iff $L=L_\Delta$.
\end{proposition}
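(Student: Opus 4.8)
The plan is to use Bernstein's Theorem (Proposition \ref{bernstein}) to express $f_t$ as a positively-weighted average of the Gaussian lattice sums $F_s$, and then transport Montgomery's Theorem (Proposition \ref{mont}) through that average. The crucial point is that $f_t$ is built from the norm $\|v\|$ whereas $F_s$ is built from the squared norm $\|p\|^2$; the bridge between the two is exactly the completely monotone function $\psi(x) = \exp(-\sqrt{x})$ from the Example preceding Proposition \ref{bernstein}.

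First I would apply Proposition \ref{bernstein} to $\psi(x) = \exp(-\sqrt{x})$ to obtain a finite positive Borel measure $\mu$ on $[0,\infty)$ with
$$\exp\left(-\sqrt{x}\right) = \int_0^\infty \exp\left(-xu\right)\, d\mu(u) \qquad (x>0).$$
Substituting $x = t^2\|v\|^2$, so that $\sqrt{x} = t\|v\|$, yields for each lattice vector $v$
$$\exp\left(-t\|v\|\right) = \int_0^\infty \exp\left(-t^2 u \|v\|^2\right)\, d\mu(u).$$
Next I would sum over $v \in L$ and interchange the (absolutely convergent) sum with the integral, which is justified by Tonelli's theorem since every term is nonnegative and the total equals the finite quantity $f_t(L)$. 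This gives
$$f_t(L) = \sum_{v \in L}\int_0^\infty \exp\left(-t^2 u\|v\|^2\right)\, d\mu(u) = \int_0^\infty F_{t^2 u}(L)\, d\mu(u),$$
expressing $f_t(L)$ as an integral of $F_s(L)$ with $s = t^2u$ against the positive measure $\mu$. For each fixed $u > 0$, Montgomery's Theorem gives $F_{t^2u}(L) \geq F_{t^2u}(L_\Delta)$, and integrating this pointwise inequality against $\mu$ immediately produces $f_t(L) \geq f_t(L_\Delta)$.

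The part requiring the most care is the equality statement, and this is the main obstacle. Here I would first check that $\mu$ carries no atom at the origin: since $\int_0^\infty \exp(-xu)\, d\mu(u) = \exp(-\sqrt{x}) \to 0$ as $x \to \infty$, dominated convergence forces $\mu(\{0\}) = 0$, while $\mu \not\equiv 0$ because $\psi$ is not identically zero; hence $\mu$ assigns positive mass to $(0,\infty)$. If $L \neq L_\Delta$, then by Montgomery's Theorem the strict inequality $F_{t^2u}(L) > F_{t^2u}(L_\Delta)$ holds for every $u > 0$, and integrating a strictly positive integrand against a measure with positive mass on $(0,\infty)$ yields $f_t(L) > f_t(L_\Delta)$. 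Conversely $L = L_\Delta$ trivially gives equality, which completes the characterization.
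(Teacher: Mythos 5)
Your proof is correct and follows essentially the same route as the paper: apply Bernstein's theorem to $\psi(x)=\exp(-\sqrt{x})$ to write $\exp(-t\|v\|)$ as a positive mixture of Gaussians, sum over the lattice, and integrate Montgomery's pointwise inequality. You are in fact somewhat more careful than the paper's version, both in getting the substitution right (the exponent $-t^{2}u\|v\|^{2}$ rather than the paper's $-ut\|p\|^{2}$) and in verifying $\mu(\{0\})=0$ so that the equality case genuinely follows.
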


For completeness we recall the elegant short proof of B\'etermin.

\begin{proof}
First  one uses Proposition \ref{bernstein} to write
$$
 \exp\left(- t \|p\|\right) = \int_0^\infty \exp\left({-u t  \|p\|^2}\right) \rho(u) du
$$
for $t>0$ and $p \in L$.
Summing gives 
$$
f_t(L) - f_t(L_\Delta)= \int_0^\infty \left( 
F_{ut}(L) - F_{ut}(L_\Delta)
\right)\rho(u) du \geq 0
$$
with   $F_{ut}(L) - F_{ut}(L_\Delta) \geq 0$ and with equality iff $L=L_\Delta$,
using Proposition \ref{mont}.
\end{proof}

We can now complete the proof of Theorem \ref{main}.

Let $A \in SL(2, \mathbb R)$ be chosen so that $A X_0$ has a triangulation by equilateral triangles and let 
$B \in SL(2, \mathbb R)$ be an element in the group which does {\it not} correspond to triangulation by equilateral triangles.

We can use the functions $f_t(A)$ and $f_t(B)$ to compare the entropies  $h(A)$ and $h(B)$ of $AX_0$ and $BX_0$, respectively.
By Proposition  \ref{betterman} we know that  that $f_t(A) <  f_t(B)$ for all $t > 0$.
By part 2 of Lemma \ref{char} the entropy $h(A)$ for the surface $AX_0$ is the unique value such that 
$f_{h(A)}(A) = \frac{1}{k}$.  However, by part 1 of Lemma \ref{char} the function $t \mapsto f_t(B)$ is monotone decreasing so the solution $f_{h(B)}(B) = \frac{1}{k}$ implies that $h(B) > h(A)$.


\section{Approximating $h(X_0)$}

Let $X_0$ denote a square tiled surface in $\mathcal H^1[k,n]$. 
 In this section we present
 a method for finding arbitrarily good approximations to $h(A)$, for a given $A\in SL(2,\mathbb R)$, and calculate the error terms of these approximations. We will then use this method to approximate $h(\Delta)$, the entropy of the equilateral surface in Example \ref{lshape}.\\

Let $N\in\mathbb N$ and define $\mathbb Z_N=\{n\in\mathbb Z:|n|\leq N\}$. We define the finite square lattice 
$$\Lambda_N =  \left\{ \left(\frac{a}{\sqrt{n(k+1)}}, \frac{b}{\sqrt{n(k+1)}}\right)  \hbox{ : }
(a,b) \in \mathbb Z_N^2\setminus\{(0,0)\} \right\}.$$

\noindent
Fix $t>0$. We can then define an $N^{th}$ approximation to $f_t(A)$ by considering a truncation of the infinite series in the definition of $f_t(A)$:

$$f_t^{(N)}(A)=\sum_{\underline v\in A(\Lambda_N)}\exp({-t\ell(\underline v)}).$$
Note that the first few derivatives of $f_t^{(N)}(A)$ also give approximations to the respective derivatives of $f_t(A)$.
Finally,  we note that the region 
$$P(A) = \left\{A\left(x,y\right) \hbox{ : }   \frac{1}{2\sqrt{n(k+1)}} \leq x, y \leq \frac{1}{2\sqrt{n(k+1)}} \right\}$$
has area $\hbox{\rm Area}(P(A))=  \frac{1}{n(k+1)}$ and  its translates by $A \left(  \frac{1}{\sqrt{n(k+1)}} \mathbb Z^2\right)$ tile
the plane  $\mathbb R^2$.

The following simple lemma allows us to bound the error in the approximations.

\begin{lemma}\label{lem:inequalities}
Fix $A\in SL(2,\mathbb R)$. We define 
 $$d(A):=\inf_{\underline x\in\mathbb R^2:||\underline x||_2=1}||A(\underline x)||_2
 \hbox{ and } D(A):= \hbox{\rm diam} (P(A)).$$ 
 Let $g:\mathbb R^+\rightarrow \mathbb R^+$ be the   function
 $g(R) = \exp(-t R)$.
 Then the following inequalities hold:
\begin{enumerate}
    \item $$\sum_{\underline v \in A(\Lambda)}g(\ell(\underline v))\leq 2\pi  n(k+1)\exp(t D(A)) \int_0^\infty Rg(R)dR;  \hbox{ and}$$
    
    \item For $N\in\mathbb N$,
    $$
    \sum_{\underline v \in A(\Lambda)}g(\ell(\underline v))-\sum_{\underline v \in A(\Lambda_N)}g(\ell(\underline v))
    \leq 2\pi   n(k+1) \exp(t D(A)) \int_{d(A) N/ \sqrt{n(k+1})}^\infty Rg(R)dR.
    $$
\end{enumerate}
\end{lemma}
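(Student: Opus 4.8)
The plan is to compare each lattice sum with a rotationally symmetric integral over the plane, exploiting the tiling of $\mathbb R^2$ by the fundamental domains $P(A)+\underline{v}$ noted just before the statement. First I would record the single pointwise estimate on which everything rests. Since $0\in P(A)$, both $\underline{v}$ and any $\underline{x}\in P(A)+\underline{v}$ lie in the same tile, so $\|\underline{x}-\underline{v}\|$ is at most the circumradius of $P(A)$. Because $P(A)=A(Q)$ with $Q=[-\tfrac{1}{2\sqrt{n(k+1)}},\,\tfrac{1}{2\sqrt{n(k+1)}}]^2$ is centrally symmetric about the origin, its circumradius equals half its diameter, namely $\tfrac12 D(A)$; hence $\|\underline{v}\|\geq\|\underline{x}\|-\tfrac12 D(A)$. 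As $g(R)=\exp(-tR)$ is decreasing and $\ell(\underline{v})=\|\underline{v}\|$, this yields the key inequality
\[
g(\|\underline{v}\|)\leq g\!\left(\|\underline{x}\|-\tfrac12 D(A)\right)=\exp\!\left(\tfrac{t}{2}D(A)\right)g(\|\underline{x}\|)\qquad(\underline{x}\in P(A)+\underline{v}).
\]

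For part (1), I would integrate this bound over the tile $P(A)+\underline{v}$, which has area $\tfrac{1}{n(k+1)}$, obtaining $g(\|\underline{v}\|)\leq n(k+1)\exp(\tfrac{t}{2}D(A))\int_{P(A)+\underline{v}}g(\|\underline{x}\|)\,d\underline{x}$. Summing over all $\underline{v}\in A(\Lambda)$ and using that the corresponding tiles are disjoint up to measure zero and contained in $\mathbb R^2$, the sum of the tile-integrals is at most $\int_{\mathbb R^2}g(\|\underline{x}\|)\,d\underline{x}=2\pi\int_0^\infty Rg(R)\,dR$ in polar coordinates. Since $\exp(\tfrac{t}{2}D(A))\leq\exp(tD(A))$, this gives inequality (1).

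For part (2), I would run the same argument with the sum restricted to $\underline{v}\in A(\Lambda)\setminus A(\Lambda_N)$. Each such $\underline{v}$ is the image under $A$ of a scaled integer vector with $\max(|a|,|b|)\geq N+1$, so its unscaled length exceeds $N/\sqrt{n(k+1)}$; applying the smallest-singular-value bound $\|A\underline{w}\|\geq d(A)\|\underline{w}\|$ gives $\|\underline{v}\|>d(A)N/\sqrt{n(k+1)}=:R_N$. Consequently every point $\underline{x}$ of the corresponding tiles satisfies $\|\underline{x}\|>R_N-\tfrac12 D(A)$, so after integrating and passing to polar coordinates the tail is bounded by $2\pi n(k+1)\exp(\tfrac{t}{2}D(A))\int_{R_N-\frac12 D(A)}^\infty Rg(R)\,dR$.

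The one genuinely delicate point — and the step I expect to be the main obstacle — is converting this last integral, whose lower limit is the shifted value $R_N-\tfrac12 D(A)$, into the clean lower limit $R_N$ demanded by the statement. I would handle this with the substitution $R=S-\tfrac12 D(A)$, which turns $g(R)=\exp(-tR)$ into $\exp(\tfrac{t}{2}D(A))g(S)$ and the integrand factor into $(S-\tfrac12 D(A))\leq S$; this trades the geometric shift for exactly one further factor of $\exp(\tfrac{t}{2}D(A))$, producing the prefactor $\exp(tD(A))$ and the lower limit $R_N$ simultaneously. Keeping track of the two appearances of $\tfrac12 D(A)$, one in the pointwise bound and one in the region of integration, and checking that they combine into the single constant $\exp(tD(A))$, is the only place where the central symmetry of $P(A)$ (hence the exact relation circumradius $=\tfrac12$ diameter) is really used.
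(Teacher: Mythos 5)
Your proof is correct and rests on the same mechanism as the paper's: bound each term $g(\ell(\underline v))$ by $\exp(cD(A))$ times the average of $g(\|\cdot\|)$ over the tile $\underline v + P(A)$, sum over the disjoint tiles, and pass to polar coordinates. Part (1) is identical to the paper's argument. For part (2), however, you take a more roundabout route than necessary: the paper integrates directly over $\{\|\underline x\|\geq d(A)N/\sqrt{n(k+1)}\}$, and this works because the \emph{entire tile} of each excluded lattice point, not just the lattice point itself, already lies beyond that radius. Indeed, an excluded $\underline v$ is $A(\underline w)$ with $\underline w=(a,b)/\sqrt{n(k+1)}$ and $\max(|a|,|b|)\geq N+1$, so every point of $\underline w+Q$ (where $Q$ is the little square of side $1/\sqrt{n(k+1)}$) has a coordinate of absolute value at least $(N+\tfrac12)/\sqrt{n(k+1)}>N/\sqrt{n(k+1)}$, whence every point of the tile $A(\underline w+Q)$ has norm greater than $d(A)N/\sqrt{n(k+1)}$. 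This makes the step you single out as ``the main obstacle'' --- the shift substitution converting the lower limit $R_N-\tfrac12 D(A)$ into $R_N$ at the cost of a second factor $\exp(\tfrac t2 D(A))$ --- entirely avoidable, and it also sidesteps the small gap in your version: when $A$ is far from orthogonal and $N$ is small, $R_N-\tfrac12 D(A)$ can be negative, so your intermediate lower limit should really be $\max(0,\,R_N-\tfrac12 D(A))$ (the substitution argument still closes after this correction, since then $R_N\leq\tfrac12D(A)$, but you should say so). Your sharper use of the circumradius $\tfrac12 D(A)$ via central symmetry of $P(A)$ is valid but buys nothing here, since the stated constant is $\exp(tD(A))$.
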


\begin{proof}
The proof follows easily by bounding each term 
$$
g(\ell(\underline v)) \leq \frac{\exp\left(t D(A)\right)}{\hbox{\rm Area}(P(A))} \int_{\underline v + P(A)} g(\sqrt{x^2+y^2})dxdy.
$$
 For part (1) we integrate over  $\mathbb R^2$ and for part (2) over $\left\{(x,y) \hbox{ : } x^2 + y^2 \geq  
 \frac{d(A)^2N^2}{n(k+1)}\right\}$, in both cases using polar coordinates.
\end{proof}

We will now use the $f_t^N(A)$ and the above lemma to approximate $h(\Delta)$, where $h(\Delta)$ denotes the entropy of the equilateral surface in Example \ref{lshape} (the unique equilateral surface in $\mathcal H^1[2,1]$.\\

It follows from Lemma \ref{char} that $h(\Delta)$ is the unique $t>0$ such that $f_t(\Delta)=1/2$.\\

By applying inequality (2) from Lemma \ref{lem:inequalities} to $f_t(\Delta)-f_t^{(N)}(\Delta)$, we obtain an upper bound for $f_t(\Delta)-f_t^{(N)}(\Delta)$ which we denote by $E_t^{(N)}(\Delta)$. Next observe the following inequalities:

$$f_t^{(N)}(\Delta)\leq f_t(\Delta)\leq f_t^{(N)}(\Delta)+E_t^{(N)}(\Delta),$$

\noindent
where each of the terms are decreasing in $t$. \\

Let $h_L^{(N)}(\Delta)$ denote the unique $t>0$ such that $f_t^{(N)}(\Delta)=1/2$ and let $h_U^{(N)}(\Delta)$ denote the unique $t>0$ such that $f_t^{(N)}(\Delta)+E_t^{(N)}(\Delta)=1/2$.\\

It follows from the previous inequality that for all $N\in\mathbb N$, $h_L^{(N)}(\Delta)\leq h(\Delta)\leq h_U^{(N)}(\Delta)$. Because $f_t^N(\Delta)$ converges to $f_t(\Delta)$ and $E_t^{(N)}(\Delta)$ converges to 0 as $N\rightarrow \infty$, we obtain arbitrarily close bounds to $h(\Delta)$ by taking $N$ sufficiently large.\\

We will first calculate $E_t^{(N)}(\Delta)$ using Lemma \ref{lem:inequalities} and then compute the bounds for $N$ sufficiently large. Note that $d(\Delta)$ is  the smallest  singular values of $\Delta$,  i.e. the square roots of the smallest eigenvalue of $\Delta^*\Delta$, where $\Delta^*$ denotes the adjoint of $\Delta$. By a standard calculation, one can show that $d(\Delta) = 0.759836 \ldots$. 
In the present setting the diameter estimate can be taken  to be $D(\Delta) = 1.07457\ldots$. 
Then we can apply Lemma \ref{lem:inequalities} to get
$$
\begin{aligned}
E_t^{(N)}(\Delta)&= 2\pi \exp(t D(\Delta)) \int_{d(\Delta) N/ \sqrt{n(k+1)}}^\infty R \exp(-tR) dR \cr
&=
2\pi \frac{\exp(t D(\Delta))}{t^2} \exp\left(- \frac{t d(\Delta)N}{\sqrt{n(k+1)}}\right) \left(\frac{td(\Delta)N}{\sqrt{n(k+1)}} + 1\right).
\end{aligned}$$\\

Using Mathematica's NSolve with working precision equal to 30, we solve $f_t^{(N)}(\Delta)=1/2$ for $t$, with $N=100$ to obtain 
$$h_L(\Delta)=4.34934504614150290303138902137\ldots$$
Again, using NSolve, we numerically solve $f_t^{(N)}(\Delta)+E_t^{(N)}(\Delta)=1/2$ for $t$, using the expression for $E_t^{(N)}(\Delta)$ with $N=100$ to also get
$$h_U(\Delta)=4.34934504614150290303138902137 \ldots$$
Hence we see that $h(\Delta)=4.34934504614150290303138902137 \ldots$ (up to 29 decimal places).

\begin{remark}
We can use the same method as in the proof of Lemma \ref{lem:inequalities} to deduce other properties of $f_t$. For instance, by setting $N=100$ and approximating the partial derivatives of $f_t$ we can show that the Hessian of $f_t(A)$ at the equilateral surface with $t=4.349\ldots$, is non-degenerate by showing that the determinant of the Hessian is approximately equal to $0.0825337\ldots>0$.
\end{remark}

\section{Final comments and questions}

\begin{enumerate}
\item
We can also consider the entropy function on general strata, without the additional restriction that the singularities share the 
same cone angle.  In this broader context it is not clear what the correct candidates for the global minima are, let alone how to prove they minimize the entropy. We suspect that they won't be equilateral surfaces by analogy with the minimization problem for finite metric graphs where metrics that minimize entropy have edge lengths proportional to the logarithm of the product of the valencies of the edge's vertices (see \cite{lim}).

\item It is natural to ask if the entropy function is smooth, by the analogy with Riemannian manifolds with negative sectional curvature \cite{kkpw}.

\end{enumerate}

\medskip
 We are grateful to the referee for suggesting the following questions.

\medskip
\begin{enumerate}

\item[(3)]
Let $X_0$ be a square-tiled surface satisfying the hypotheses in (1). 
Is  the  number of equilateral surfaces in $SL(2,\mathbb R)X_0$  related 
to the Veech group of $X_0$ in $SL(2,\mathbb Z)$?
The Veech groups for our examples in \S 3 are computed in 
\cite{Schmitusen}. This question could potentially be studied by looking at the $SL(2,\mathbb Z)$-orbit of $X_0$ using SageMath (see \url{http://www.sagemath.org}).

\item[(4)] Since the entropy on the $SL(2, \mathbb R)$-orbit  does not depend on rotating the square tiled surface, it descends to a function on $\mathbb H/\Gamma$, where $\Gamma$ is the Veech group of the surface. A natural question is how this function behaves on this quotient, for example, how does it behave in the cusp?

\end{enumerate}

\end{document}